\newcommandx{\unsure}[2][1=]{\todo[linecolor=red,backgroundcolor=red!25,bordercolor=red,#1]{#2}}
\newcommandx{\change}[2][1=]{\todo[linecolor=blue,backgroundcolor=blue!25,bordercolor=blue,#1]{#2}}
\newcommandx{\info}[2][1=]{\todo[linecolor=OliveGreen,backgroundcolor=OliveGreen!25,bordercolor=OliveGreen,#1]{#2}}
\newcommandx{\improvement}[2][1=]{\todo[linecolor=Plum,backgroundcolor=Plum!25,bordercolor=Plum,#1]{#2}}
\newcommandx{\thiswillnotshow}[2][1=]{\todo[disable,#1]{#2}}
\crefname{assumption}{assumption}{assumptions}  % Lowercase for mid-sentence usage
\Crefname{assumption}{Assumption}{Assumptions}  % Uppercase for starting sentences
\crefname{counterexample}{counterexample}{counterexamples}
\Crefname{counterexample}{Counterexample}{Counterexamples}
\theoremstyle{plain}
\theoremstyle{plain}
\newtheorem{theorem}{Theorem}[section]
\newtheorem{corollary}[theorem]{Corollary}
\newtheorem{proposition}[theorem]{Proposition}
\newtheorem{lemma}[theorem]{Lemma}
\theoremstyle{definition}
\newtheorem{definition}[theorem]{Definition}
\newtheorem{assumption}[theorem]{Assumption}
\theoremstyle{remark}
\newtheorem{remark}[theorem]{Remark}
\theoremstyle{definition}
\theoremstyle{remark}
\numberwithin{equation}{section}
\newcommand{\N}{\mathbb{N}}
\mathchardef\emptyset="001F
\newcommand{\eps}{\varepsilon}
\newcommand{\K}{\mathcal{K}}
\newcommand{\X}{\mathcal{X}}
\newcommand{\Y}{\mathcal{Y}}
\newcommand{\C}{\mathbb{C}}
\newcommand{\Tree}{\mathsf{Tree}}
\newcommand{\T}{\mathbb T}
\newcommand{\diam}{\operatorname{diam}}
\newcommand{\supp}{\operatorname{supp}}
\newcommand{\SCIG}{\operatorname{SCI}_{\mathrm G}}
\newcommand{\sigap}{\sigma_{\mathrm{ap}}}
\newcommand{\sigp}{\sigma_{\mathrm p}}
\newcommand{\ev}{\operatorname{ev}}
\newcommand{\concat}{\mathbin{{}^{\smallfrown}}}
\newcommand{\One}{\mathbf 1}
\newcommand{\id}{\mathrm{id}}
\title[\bf Endpoint Koopman Spectral Computation]{\bf Endpoint Koopman Spectral Computation: $L^1$ Residual Bounds, $L^\infty$ Instability, and Point-Spectral SCI Calibration Families}
\begin{document}

\author[C.~Sorg]{Christopher Sorg$^1$}
	\address[C.~Sorg]{
		\textup{Chair for Theoretical computer science, mathematics, and operations research}
		\newline \indent
		\textup{Department of Computer Science} \newline \indent
		\textup{University of the Bundeswehr Munich}
		\newline \indent
		\textup{85577 Neubiberg, Germany}}

\email{{\href{mailto:chr.sorg@unibw.de}{\textcolor{blue}{\texttt{chr.sorg@unibw.de}}}}
}

\footnotetext[1]{Inf1, University of the Bundeswehr Munich, Werner-Heisenberg-Weg 39, 85577 Neubiberg, Germany}

\begin{abstract}
We study endpoint Koopman spectral computation from the viewpoint of the Solvability
Complexity Index (SCI).  Let \((\mathcal X,d)\) be a compact metric space with finite
Borel measure \(\omega\), and let \(\mathcal K_F\) be the Koopman operator associated
with a continuous nonsingular map \(F:\mathcal X\to\mathcal X\).

First, on \(L^1(\mathcal X,\omega)\), we record the endpoint residual upper-bound in the target-split form. The regularized compact fixed-\(\varepsilon\)
target $R_{\mathrm{ap},\varepsilon}(\mathcal K_F)$ is separated from the closed fixed-\(\varepsilon\) target $C_{\mathrm{ap},\varepsilon}(\mathcal K_F)$ and from the exact approximate point spectrum $\sigma_{\mathrm{ap}}(\mathcal K_F).$ This endpoint statement uses the same point-evaluation plus fixed-quadrature information model as the \(1<p<\infty\) residual theory.

Second, we isolate two obstructions at the nonseparable endpoint \(L^\infty\). Fixed quadrature schemes do not discretize the full \(L^\infty\) unit sphere, and even inside measure-preserving Cantor homeomorphisms the map $F\mapsto \sigma_{\mathrm{ap}}(\mathcal K_F:L^\infty\to L^\infty)$ is maximally discontinuous in Hausdorff distance under arbitrarily small uniform perturbations of \(F\). We also show that finite-period Silver-tree block constructions cannot yield analytic hardness for the \(L^\infty\) approximate point spectrum: for a fixed non-torsion \(z_0\in\mathbb T\), the condition $z_0\in\sigma_{\mathrm{ap}}(\mathcal K_{F}:L^\infty\to L^\infty)$ collapses to a Borel unbounded-period condition. In addition, fixed \(L^\infty\) point-eigenvalue membership is Borel in the measure-preserving continuous class, so one fixed eigenvalue cannot encode a non-Borel tree predicate.

Third, we construct Koopman point-spectrum calibration families on the Cantor space. For each \(m\in \mathbb{N}\), we build a family of continuous measure-preserving Cantor homeomorphisms whose labelled exact \(L^\infty\) point-eigenvalue decisions are finite-query equivalent to the canonical alternating Cantor-matrix source problem of raw type-\(G\) height \(m\). Consequently these Koopman decision problems have exact raw type-\(G\) SCI height \(m\), and their tagged disjoint union has raw type-\(G\) SCI $\infty$.
\end{abstract}

\maketitle

\begin{center}\small
\textbf{Keywords:} Solvability Complexity Index; Koopman operators; approximate point spectrum; point spectrum; Cantor dynamics; finite-query reductions; Weihrauch complexity
\end{center}
\tableofcontents

%------------------------------------------------
\section{Introduction}\label{sec:introduction}

The solvability complexity index (SCI) measures how many nested limiting processes are
unavoidable when computing a mathematical object from a prescribed information
interface.  It was introduced in spectral computation and has since become a framework
for classifying problems whose numerical solution cannot be captured by a single
finite procedure; see, for example, \cite{hansen2011solvability,ben2015computing,colbrook2022foundations}.  In this
language a computational problem is specified by an input class \(\Omega\), a target
space \(\mathcal M\), a problem map
\[
        \Xi:\Omega\to\mathcal M,
\]
and an evaluation family \(\Lambda\).  A (raw) type-\(G\) tower uses finitely many oracle
values at each deepest level and then takes iterated pointwise limits; its height is the
number of nested limits required. The type-\(A\) and represented-space variants impose
additional effectiveness or regularity restrictions on the deepest-level procedures.

This paper studies endpoint Koopman spectral computation from that point of view.
The operator-theoretic viewpoint goes back to Koopman and Koopman-von Neumann \cite{koopman1931hamiltonian,koopman1932dynamical}; modern data-driven
interest was revived by spectral Koopman methods, DMD/EDMD, and related approaches \cite{mezic2005spectral,rowley2009spectral,schmid2010dynamic,williams2015edmd,
budivsic2012applied,brunton2021modern}. Because Koopman operators are in general
infinite-dimensional and may have continuous spectrum or often have no finite-dimensional
invariant subspaces, verified residual and pseudospectral methods have become particularly important in rigorous data-driven work \cite{colbrook2024rigorous,colbrook2023residual,colbrook2024limits}.

The endpoint spaces \(L^1(\mathcal X,\omega)\) and \(L^\infty(\mathcal X,\omega)\) behave very differently from the reflexive regime \(1<p<\infty\). In the \(1<p<\infty\) theory \cite{sorg2025solvability}, the approximate-point problem is split into the regularized compact target $R_{\mathrm{ap},\varepsilon}(\K_F)$, the closed target $C_{\mathrm{ap},\varepsilon}(\K_F)$, and the exact approximate point spectrum $\sigma_{\mathrm{ap}}(\K_F)$, which will be defined in precise form in \cref{sec:target-split}. The proofs in \cite{sorg2025solvability} use continuous finite-dimensional dictionaries and tagged quadrature residuals. The present work records the corresponding \(L^1\) endpoint abstraction, but the main new phenomena occur at \(L^\infty\).

At \(L^\infty\), the observable space is nonseparable in general, fixed quadrature data do not discretize the full unit sphere, and approximate eigenvectors may live in parts of the space invisible to any fixed finite-dimensional trial mechanism. Thus one has to separate two questions which are often conflated. The first is the residual question: what can be verified from inequalities of the form
\[
        \|(\K_F-zI)g\|\le\varepsilon?
\]
The second is the exact information-theoretic question: what raw SCI height can be realized by Koopman spectral predicates when the input is the underlying map \(F\) and the oracle consists of point evaluations? The exact point spectrum is not a robust numerical target, but it is an interesting calibration object for the raw SCI hierarchy.

We work throughout the main \(L^\infty\) constructions on the Cantor space $\mathcal X=2^{\mathbb N}$ with its standard ultrametric and Bernoulli product measure. This setting is deliberately chosen. It is concrete enough to allow explicit clopen cyclic gadgets and measure-preserving homeomorphisms, but flexible enough to encode Cantor-matrix decision problems by point evaluations. In this way it gives a transparent operator-theoretic realization of the abstract finite-height source problems developed in \cite{sorg2026foundational}.

The contributions of this paper are (i) \textbf{\(L^1\) endpoint residual targets}, that complete the $1<p<\infty$ theory in \cite{sorg2025solvability}; (ii) \textbf{\(L^\infty\) approximate-point no-go diagnostics}, where we prove a block lower-norm formula for \(L^\infty\)-products and use it to analyze finite-period constructions. The resulting finite-period Silver-tree scheme does not realize analytic hardness: for a fixed non-torsion point \(z_0\in\mathbb T\), membership
\[
        z_0\in\sigma_{\mathrm{ap}}(\mathcal K_{F}:L^\infty\to L^\infty)
\]
collapses to the Borel condition that the selected periods are unbounded. We also prove a \(C^0\)-instability result:
\[
        F^{(n)}\to \mathrm{id}_{2^{\mathbb N}}
        \quad\text{uniformly},
        \qquad
        \sigma_{\mathrm{ap}}(\mathcal K_{F^{(n)}})=\mathbb T,
        \qquad
        \sigma_{\mathrm{ap}}(\mathcal K_{\mathrm{id}})=\{1\}.
\]
Thus small uniform perturbations can cause a maximal Hausdorff jump in the \(L^\infty\) approximate point spectrum. (iii) \textbf{No analytic fixed-eigenvalue route for \(L^\infty\) point spectrum:} For measure-preserving maps, fixed \(L^\infty\) point-eigenvalue membership agrees with fixed \(L^2\) point-eigenvalue membership. We prove that for each fixed
\(\lambda\in\mathbb C\) the set
\[
        \{F:F_\#\omega=\omega,\ \lambda\in\sigma_{\mathrm p}(\mathcal K_F:L^\infty\to L^\infty)\}
\]
is Borel in the uniform topology on \(C(\mathcal X,\mathcal X)\). Hence no Borel coding of a non-Borel tree predicate, such as the Silver-tree predicate, can reduce that predicate to one fixed \(L^\infty\) point-eigenvalue question. (iv) \textbf{Point-spectral Koopman calibration families:} The positive result is different. We do not use one fixed eigenvalue. Instead, for each \(m\in \mathbb{N}\), we construct a family of continuous measure-preserving Cantor homeomorphisms and a sequence of labelled roots of unity $(\lambda_j)_{j\ge1}$ such that labelled exact \(L^\infty\) point-eigenvalue membership recovers the bits of a Cantor-matrix input. This yields a finite-query equivalence between the Koopman decision problem and the canonical alternating Cantor-matrix source problem of height \(m\) from \cite{sorg2026foundational}. Consequently the Koopman problem has exact raw type-\(G\) SCI height \(m\). The tagged disjoint union has infinite raw type-\(G\) SCI height.

The message is therefore deliberately two-sided. For numerical Koopman practice, residual and pseudospectral quantities remain the robust targets. Exact point spectrum is brittle and is not presented here as the practical object to compute from data. Its role in this paper is foundational: it provides a concrete Koopman realization of the finite and infinite raw SCI calibration hierarchy developed in \cite{sorg2026foundational}.

\section{Koopman Framework On \texorpdfstring{$L^{1}(\mathcal X,\omega)$}{L1(X,w)} And On \texorpdfstring{$L^{\infty}(\mathcal X,\omega)$}{Loo(X,w)}}
\label{sec:KoopL1Loo}

Throughout this section:
\begin{itemize}
  \item $(\mathcal X,d_{\mathcal X})$ is a \textit{compact} metric space;
  \item $\omega$ is a finite Borel measure on $\mathcal X$ with $\omega(\mathcal X)\neq 0$,
\end{itemize}
since the case $\omega(\mathcal{X}) = 0$ is trivial.

\subsection{Koopman Operator On \texorpdfstring{$L^{1}(\mathcal{X},\omega)$}{L1(X,w)}}
\label{subsec:KoopmanL1}
We write $\|\,\cdot\,\|_{p}:=\|\cdot\|_{L^{p}(\omega)}$ and given a measurable map $F:\mathcal X\to\mathcal X$, define
\[
  \mathcal K_{F} : L^{1}(\mathcal X,\omega)\;\rightarrow\;L^{1}(\mathcal X,\omega),
  \qquad
  (\mathcal K_{F}f)(x) := f\bigl(F(x)\bigr),\;\;f\in L^{1}(\mathcal X,\omega).
\]
We say $F$ is \textit{nonsingular} with respect to $\omega$, if $F_{\#}\omega\ll \omega$ and denote its Radon-Nikodým density by
\[
  \rho_{F} := \frac{dF_{\#}\omega}{d\omega}\in L^{1}(\mathcal{X},\omega).
\]

The boundedness criteria are the exact same as in the $1<p<\infty$ case, since \cite[Proposition 3.1]{sorg2025solvability} holds verbatim for $p=1$. Nevertheless we state it for completeness here again.

\begin{proposition}[Boundedness on $L^{1}$]\label{prop:boundednessL1}
Let $F$ be nonsingular and $\rho_{F}\in L^{\infty}(\omega)$. Then $\mathcal K_{F}:L^{1}(\omega)\to L^{1}(\omega)$ is bounded and
\[
  \|\mathcal K_{F}\|_{L^{1}\!\to L^{1}}\;=\;\|\rho_{F}\|_{L^{\infty}}.
\]
In particular, if $F$ is measure–preserving ($\rho_{F}=1$ a.e.), then $\mathcal K_{F}$ is an isometry on $L^{1}(\omega)$.
\end{proposition}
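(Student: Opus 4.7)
The plan is a standard change-of-variables / duality argument; the substance is nothing more than the pushforward identity rewritten in terms of the Radon–Nikodým density $\rho_F$, but one should still lay out upper and lower bounds symmetrically.

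For the upper bound I would start from the definition: for $f\in L^1(\omega)$,
\[
\|\mathcal K_F f\|_1=\int_{\mathcal X}|f\circ F|\,d\omega=\int_{\mathcal X}|f|\,dF_{\#}\omega=\int_{\mathcal X}|f|\,\rho_F\,d\omega
\le \|\rho_F\|_{L^\infty}\,\|f\|_1,
\]
where the second equality is the abstract change-of-variables formula for pushforward measures and the third uses nonsingularity together with the definition of $\rho_F$. This already gives $\|\mathcal K_F\|_{L^1\to L^1}\le \|\rho_F\|_{L^\infty}$ and, in particular, boundedness.

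For the matching lower bound I would test against (normalised) indicators of superlevel sets of $\rho_F$. Fix $\varepsilon>0$ and set $A_\varepsilon:=\{x\in\mathcal X:\rho_F(x)>\|\rho_F\|_{L^\infty}-\varepsilon\}$; by the definition of essential supremum $\omega(A_\varepsilon)>0$. Taking $f_\varepsilon:=\mathbf 1_{A_\varepsilon}/\omega(A_\varepsilon)$ we have $\|f_\varepsilon\|_1=1$ and
\[
\|\mathcal K_F f_\varepsilon\|_1=\int_{\mathcal X}|f_\varepsilon|\,\rho_F\,d\omega
=\frac{1}{\omega(A_\varepsilon)}\int_{A_\varepsilon}\rho_F\,d\omega
\ge \|\rho_F\|_{L^\infty}-\varepsilon,
\]
by the identity of the previous paragraph applied to $f_\varepsilon$. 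Sending $\varepsilon\downarrow 0$ yields $\|\mathcal K_F\|_{L^1\to L^1}\ge \|\rho_F\|_{L^\infty}$, and combining with the upper bound gives equality. The measure-preserving case is then immediate: if $\rho_F=1$ a.e., the first display collapses to $\|\mathcal K_F f\|_1=\|f\|_1$ for every $f\in L^1(\omega)$.

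I do not anticipate a genuine obstacle; the only point that deserves a line of care is justifying the change-of-variables step (which requires only measurability of $f\circ F$, guaranteed by measurability of $F$, together with $F_{\#}\omega\ll\omega$) and ensuring that $A_\varepsilon$ is a Borel set of positive measure so that $f_\varepsilon$ is a legitimate $L^1$ test function. Both are routine once the nonsingularity hypothesis is in force, so this essentially reduces to the corresponding proof in the reflexive regime, which is precisely why the statement transfers verbatim from \cite[Proposition~3.1]{sorg2025solvability}.
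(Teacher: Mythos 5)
Your proof is correct, and it is exactly the standard change-of-variables/pushforward argument that the paper itself invokes: the paper gives no separate proof here but states that \cite[Proposition~3.1]{sorg2025solvability} (where $\|\mathcal K_F\|_{p\to p}=\|\rho_F\|_\infty^{1/p}$ is obtained by the same identity $\int|f|^p\,dF_\#\omega=\int|f|^p\rho_F\,d\omega$ plus testing on superlevel sets of $\rho_F$) holds verbatim for $p=1$. The only point worth one extra line is that nonsingularity also guarantees $[f]\mapsto[f\circ F]$ is well defined on $L^1$-equivalence classes, which is implicit in your change-of-variables remark.
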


\begin{proof}
For \(f\in L^1(\omega)\),
\[
        \|\K_Ff\|_{L^1(\omega)} =
        \int_\X |f(F(x))|\,d\omega(x) =
        \int_\X |f(y)|\,d(F_\#\omega)(y) =
        \int_\X |f(y)|\rho_F(y)\,d\omega(y).
\]
Hence
\[
        \|\K_Ff\|_1\le \|\rho_F\|_\infty\|f\|_1.
\]
Thus \(\K_F\) is bounded and
\[
        \|\K_F\|_{1\to1}\le \|\rho_F\|_\infty.
\]

For the reverse inequality, let
\[
        M:=\|\rho_F\|_\infty.
\]
If \(M=0\), there is nothing to prove. For \(\eta>0\), the set
\[
        E_\eta:=\{y\in \X:\rho_F(y)>M-\eta\}
\]
has positive \(\omega\)-measure. Put
\[
        f_\eta:=\frac{1_{E_\eta}}{\omega(E_\eta)}.
\]
Then
\[
        \|f_\eta\|_1=1
\]
and
\[
        \|\K_Ff_\eta\|_1 =
        \frac1{\omega(E_\eta)} \int_{E_\eta}\rho_F\,d\omega \ge M-\eta.
\]
Letting \(\eta\downarrow0\) gives
\[
        \|\K_F\|_{1\to1}\ge M.
\]
Therefore
\[
        \|\K_F\|_{1\to1}=\|\rho_F\|_\infty.
\]
If \(F\) is measure-preserving, then \(\rho_F=1\) a.e., so \(\K_F\) is an isometry.
\end{proof}

Therefore we fix in the $L^1$ case as assumption

\begin{assumption}\label{ass:BoundenessL1}
Suppose the following holds.
\begin{enumerate}[label=\textbf{(LB\arabic*)}]
    \item \label{LB1} $F_{\#}\omega\ll\omega$; \label{ass:B1}
    \item \label{LB2} $\rho_{F}(x) := \dfrac{dF_{\#}\omega}{d\omega}(x) \in L^{\infty}(\mathcal{X},\omega)$.
\end{enumerate}
\end{assumption}

We shall frequently use the following stability estimate.

\begin{corollary}[Composition stability in $L^{1}$]\label{lem:L1-stability}
Under \ref{LB1}–\ref{LB2}, if $f_{n}\to f$ in $L^{1}(\omega)$, then $f_{n}\circ F\to f\circ F$ in $L^{1}(\omega)$ and
\[
  \|f_{n}\circ F-f\circ F\|_{1}
  = \int |f_{n}-f|\,d(F_{\#}\omega)
  \le \|\rho_{F}\|_{\infty}\,\|f_{n}-f\|_{1}.
\]
\end{corollary}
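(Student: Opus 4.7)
The plan is to reduce everything to a one-line change of variables against the pushforward measure $F_\#\omega$, then absorb its Radon--Nikodým density $\rho_F$ into an $L^\infty$ bound. Because the claimed inequality is linear in $f_n-f$, I would treat the difference $g := f_n - f \in L^1(\omega)$ as the generic object and show
\[
\|g\circ F\|_1 \;=\; \int |g\circ F|\,d\omega \;=\; \int |g|\,dF_\#\omega \;\le\; \|\rho_F\|_\infty\,\|g\|_1.
\]

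The first step is the change-of-variables identity for pushforward measures. For a nonnegative Borel function $h:\X\to[0,\infty]$ one has $\int h\circ F\,d\omega=\int h\,dF_\#\omega$ by the standard monotone-class/$\pi$-$\lambda$ argument (verify it on indicators of Borel sets, extend by linearity and monotone convergence). Applied to $h=|g|$ this yields the claimed equality in the statement.

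The second step invokes \ref{LB1}--\ref{LB2}: nonsingularity gives $dF_\#\omega=\rho_F\,d\omega$, and $\rho_F\in L^\infty(\omega)$ gives the pointwise (a.e.) bound $\rho_F\le \|\rho_F\|_\infty$. Hence
\[
\int |g|\,dF_\#\omega \;=\; \int |g|\,\rho_F\,d\omega \;\le\; \|\rho_F\|_\infty \int |g|\,d\omega \;=\; \|\rho_F\|_\infty\,\|g\|_1.
\]
Substituting $g=f_n-f$ yields the displayed inequality, and convergence $\|f_n\circ F-f\circ F\|_1\to 0$ follows directly from $\|f_n-f\|_1\to 0$.

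There is no real obstacle here; the only point that needs care is the measurability layer. The Koopman image $g\circ F$ is only defined $\omega$-a.e.\ as an $L^1$ class, and one must check that the almost-everywhere representative of $g$ pulls back consistently. This is precisely where nonsingularity \ref{LB1} is used: if $g_1=g_2$ $\omega$-a.e., then $\{g_1\neq g_2\}$ is $\omega$-null, hence $F_\#\omega$-null (by absolute continuity), hence $F^{-1}(\{g_1\neq g_2\})$ is $\omega$-null, so $g_1\circ F=g_2\circ F$ $\omega$-a.e. Thus $\mathcal K_F$ descends to a well-defined map on $L^1$-classes, and the inequality above is independent of the chosen representative.
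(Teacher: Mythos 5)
Your proof is correct and follows essentially the same route as the paper, which treats this corollary as an immediate consequence of \cref{prop:boundednessL1}: applying $\|\mathcal K_F g\|_1\le\|\rho_F\|_\infty\|g\|_1$ to $g=f_n-f$, where that bound itself rests on exactly the change-of-variables identity $\int|g\circ F|\,d\omega=\int|g|\,d(F_\#\omega)=\int|g|\rho_F\,d\omega$ that you spell out. Your extra remark on well-definedness on $L^1$-classes via nonsingularity is a sound (and welcome) explicit check of a point the paper leaves implicit.
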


\subsection{Koopman Operator On \texorpdfstring{$L^{\infty}(\mathcal{X},\omega)$}{Loo(X,w)}}
\label{subsec:KoopmanLoo}

We write $\|\,\cdot\,\|_{\infty}:=\|\cdot\|_{L^{\infty}(\omega)}$. Given a measurable map
$F:\mathcal{X}\to\mathcal{X}$ we would like to define, analogously to the $L^{p}$ case,
\[
  \mathcal K_{F} : L^{\infty}(\mathcal X,\omega)\;\rightarrow\;L^{\infty}(\mathcal X,\omega),
  \qquad
  (\mathcal K_{F}f)(x) := f\bigl(F(x)\bigr).
\]
Since $L^{\infty}(\omega)$ consists of $\omega$-a.e.\ equivalence classes, the expression
$f\circ F$ is \textit{well-defined on classes} if and only if changing $f$ on an
$\omega$-null set does not change $f\circ F$ on a set of positive $\omega$-measure.
This is exactly the nonsingularity condition.

\begin{proposition}[Well-definedness and boundedness on $L^{\infty}$]\label{prop:boundednessLoo}
Let $F:\mathcal{X}\to\mathcal{X}$ be measurable. The following are equivalent.
\begin{enumerate}[label=\textbf{(\roman*)}]
  \item The rule $[f] \mapsto [f\circ F]$ defines a well-defined linear operator
  $\mathcal K_F:L^{\infty}(\omega)\to L^{\infty}(\omega)$.
  \item $F$ is nonsingular with respect to $\omega$, i.e.\ $F_{\#}\omega\ll \omega$.
\end{enumerate}
If these equivalent conditions hold, then \(\K_F\) is a contraction, i.e.
\[
\|\K_F f\|_\infty\le \|f\|_\infty.
\]
Since \(\K_F \One = \One\) and \(\omega(\X)>0\), one has \(\|\K_F\|=1\).
\end{proposition}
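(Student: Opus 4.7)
The plan is to dispatch the equivalence (i) $\Leftrightarrow$ (ii) via a pointwise/preimage computation, and then deduce the contraction estimate plus the sharpness $\|\mathcal K_F\|=1$ as routine corollaries.

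For (ii) $\Rightarrow$ (i), I would start from the basic preimage identity: if $f,g\in L^\infty(\omega)$ are two representatives of the same class, then the set $N:=\{f\neq g\}$ has $\omega(N)=0$, and
\[
  \{f\circ F\neq g\circ F\}\;=\;F^{-1}(N),
  \qquad
  \omega\bigl(F^{-1}(N)\bigr)\;=\;F_{\#}\omega(N).
\]
Nonsingularity $F_{\#}\omega\ll\omega$ forces the right-hand side to vanish, so $f\circ F$ and $g\circ F$ agree $\omega$-a.e.\ and the class $[f\circ F]$ is independent of the representative. Linearity is immediate, giving a well-defined operator $\mathcal K_F$.

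For (i) $\Rightarrow$ (ii), I would argue by contrapositive. If $F_{\#}\omega\not\ll\omega$, pick a Borel set $N\subset\mathcal X$ with $\omega(N)=0$ but $F_{\#}\omega(N)=\omega(F^{-1}(N))>0$. Then $f:=\mathbf 1_N$ and $g:=0$ represent the same class in $L^\infty(\omega)$, yet $f\circ F=\mathbf 1_{F^{-1}(N)}$ has strictly positive $L^\infty$ essential norm while $g\circ F\equiv 0$. Hence the rule $[f]\mapsto [f\circ F]$ is ill-defined, contradicting (i). This is the only step with any subtlety, and even it is really just the standard pullback-null-sets argument; no obstacles are expected.

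Once nonsingularity is in force, the contraction estimate follows by transferring the essential-supremum inequality through $F^{-1}$: the set $A:=\{|f|>\|f\|_\infty\}$ has $\omega(A)=0$, so
\[
  \omega\bigl(\{|f\circ F|>\|f\|_\infty\}\bigr)
  \;=\;\omega\bigl(F^{-1}(A)\bigr)
  \;=\;F_{\#}\omega(A)\;=\;0
\]
by $F_{\#}\omega\ll\omega$. This gives $\|\mathcal K_F f\|_\infty\le\|f\|_\infty$. To see that the operator norm is exactly $1$ (and not smaller), I would test against the constant function $\mathbf 1\in L^\infty(\omega)$, which is nonzero since $\omega(\mathcal X)\neq 0$ by standing assumption: $\mathcal K_F\mathbf 1=\mathbf 1$, so $\|\mathcal K_F\|\ge 1$, and combined with the contraction the operator norm equals $1$.
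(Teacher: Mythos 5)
Your proof is correct and follows the standard pullback-of-null-sets argument (well-definedness via $\omega(F^{-1}(N))=F_\#\omega(N)$, the contrapositive with $\mathbf 1_N$, the essential-sup transfer for the contraction, and testing on $\mathbf 1$ for $\|\mathcal K_F\|\ge 1$), which is precisely the intended argument; the paper itself states the proposition without proof, treating this as routine. No gaps.
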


\begin{proof}
Assume first that \(F_\#\omega\ll\omega\). Let \(f,g\in L^\infty(\omega)\) with \(f=g\) \(\omega\)-a.e. Then
\[
        N:=\{y:f(y)\ne g(y)\}
\]
is an \(\omega\)-null-set. By nonsingularity,
\[
        \omega(F^{-1}(N))=F_\#\omega(N)=0.
\]
Hence
\[
        f\circ F=g\circ F \qquad \omega\text{-a.e.}
\]
Thus the rule \([f]\mapsto[f\circ F]\) is well-defined on equivalence classes.

Moreover, if \(|f|\le M\) \(\omega\)-a.e., then
\[
        |f\circ F|\le M \qquad \omega\text{-a.e.},
\]
because \(F^{-1}(\{|f|>M\})\) is an \(\omega\)-null-set. Therefore
\[
        \|\K_F f\|_\infty\le \|f\|_\infty.
\]
Since \(\K_F \One = \One\) and \(\omega(\X)>0\), we also have
\[
        \|\K_F\|=1.
\]

Conversely, suppose \(F_\#\omega\not\ll\omega\). Then there is a Borel set \(N\subseteq \X\) such that
\[
        \omega(N)=0, \,
        F_\#\omega(N)=\omega(F^{-1}(N))>0.
\]
The function \(\mathbf 1_N\) represents the zero class in \(L^\infty(\omega)\), but
\[
        \mathbf 1_N\circ F= \mathbf 1_{F^{-1}(N)}
\]
is not zero in \(L^\infty(\omega)\). Hence composition does not define a well-defined operator on \(L^\infty\)-classes. This proves the equivalence.
\end{proof}

Therefore we fix in the $L^\infty$ case as assumption

\begin{assumption}\label{ass:BoundenessLoo}
Suppose the following holds.
\begin{enumerate}[label=\textbf{(UB\arabic*)}]
  \item \label{UB1} $F_{\#}\omega\ll\omega$.
\end{enumerate}
\end{assumption}

We shall frequently use the following stability estimate, that follows directly from \cref{lem:L1-stability} and \cref{prop:boundednessLoo}.

\begin{lemma}[Composition stability in $L^{\infty}$]\label{lem:Loo-stability}
Under \ref{UB1}, if $f_{n}\to f$ in $L^{\infty}(\omega)$, then $f_{n}\circ F\to f\circ F$
in $L^{\infty}(\omega)$ and
\[
  \|f_{n}\circ F - f\circ F\|_{\infty}
  \le \|f_{n}-f\|_{\infty}.
\]
\end{lemma}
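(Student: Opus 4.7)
The plan is to reduce the statement to the already-established contraction bound from \cref{prop:boundednessLoo}. Under \ref{UB1}, that proposition guarantees that $\mathcal K_F$ is a well-defined \emph{linear} operator on $L^{\infty}(\omega)$-equivalence classes with operator norm $1$. This is precisely the content we need; the lemma is essentially a one-line consequence once linearity plus contractivity are in hand.

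First I would invoke \cref{prop:boundednessLoo} under \ref{UB1} to fix $\mathcal K_F:L^{\infty}(\omega)\to L^{\infty}(\omega)$ as a bounded linear operator, in particular ensuring that $[f_n\circ F]$ and $[f\circ F]$ are unambiguously defined as $L^{\infty}$-classes (this is the subtle point for $L^{\infty}$, since a.e.-null modifications of $f$ must not alter $f\circ F$ on a positive-measure set, which is exactly what nonsingularity secures). Second, by linearity of $\mathcal K_F$ one has
\[
  f_n\circ F - f\circ F \;=\; \mathcal K_F(f_n - f)
  \qquad \text{in } L^{\infty}(\omega).
\]
Third, apply the contraction estimate $\|\mathcal K_F g\|_{\infty}\le \|g\|_{\infty}$ of \cref{prop:boundednessLoo} to $g:=f_n-f$, which yields
\[
  \|f_n\circ F - f\circ F\|_{\infty} \;\le\; \|f_n - f\|_{\infty},
\]
as claimed. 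The convergence $f_n\circ F\to f\circ F$ in $L^{\infty}(\omega)$ then follows immediately by letting $n\to\infty$ on the right-hand side.

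There is essentially no obstacle in the analytic sense: the only delicate point is the class-level well-definedness of $f\circ F$, but this is exactly what \ref{UB1} (nonsingularity) is designed to enforce, and it has already been absorbed into the statement of \cref{prop:boundednessLoo}. Hence the proof reduces to pure linear-algebraic bookkeeping on equivalence classes plus one appeal to the previous proposition.
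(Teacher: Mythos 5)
Your proof is correct and follows exactly the route the paper intends: the lemma is an immediate consequence of the well-definedness and contraction estimate $\|\mathcal K_F g\|_{\infty}\le\|g\|_{\infty}$ from \cref{prop:boundednessLoo}, applied by linearity to $g=f_n-f$. Nothing further is needed.
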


\begin{remark}[When is $\mathcal K_F$ an isometry on $L^\infty$?]\label{rem:KoopIsoLoo}
In contrast to the case $1\le p<\infty$, the $L^\infty$-norm does not see the density $\rho_F$. The operator $\mathcal K_F$ is an isometry on $L^\infty(\omega)$ if and only if
$\omega$ and $F_{\#}\omega$ are equivalent measures (i.e.\ $\omega(A)=0$ iff $\omega(F^{-1}(A))=0$). In particular, if $F$ is measure-preserving, then $\mathcal K_F$
is an isometry on $L^\infty(\omega)$.
\end{remark}

\section{Approximate-Point Targets And \texorpdfstring{$L^1$}{L1} Upper-Bound}\label{sec:target-split}

\begin{definition}[Lower norm and three approximate-point targets]
Let $\mathcal F$ be a Banach space of observables, let $\K_F\in\mathcal B(\mathcal F)$, and set
\[
\nu_F(z):=\inf\{\|(\K_F-zI)g\|_{\mathcal F}:\|g\|_{\mathcal F}=1\},
\]
where $z\in\C$. For $\eps>0$ define the \textit{regularized approximate-point $\eps$-pseudospectrum}
\[
R_{\mathrm{ap},\eps}(\K_F) :=\overline{\{z\in\C:\nu_F(z)<\eps\}},
\]
the \textit{closed approximate-point $\eps$-pseudospectrum}
\[
C_{\mathrm{ap},\eps}(\K_F) :=\{z\in\C:\nu_F(z)\le\eps\},
\]
and the approximate point spectrum
\[
\sigap(\K_F):=\{z\in\C:\nu_F(z)=0\}.
\]
\end{definition}

\begin{remark}[Relations between the three targets]\label{rem:target-relations}
For every bounded $\K_F$ and every $\eps>0$,
\[
R_{\mathrm{ap},\eps}(\K_F)\subseteq C_{\mathrm{ap},\eps}(\K_F) \subseteq R_{\mathrm{ap},\eps+\delta}(\K_F)
\]
for $\delta>0$. Consequently,
\[
C_{\mathrm{ap},\eps}(\K_F)=\bigcap_{q=1}^{\infty} R_{\mathrm{ap},\eps+2^{-q}}(\K_F), \qquad
\sigap(\K_F)=\bigcap_{q=1}^{\infty}R_{\mathrm{ap},2^{-q}}(\K_F).
\]
These identities are the safe way to pass from strict residual tests to closed or exact targets.
\end{remark}

\begin{assumption}[Dictionary and quadrature hypotheses for the $L^1$ residual theorem]\label{ass:dictionary-quadrature-Vn}
Let $(\X,d)$ be compact, let $\omega$ be a finite Borel measure with $\omega(\X)>0$, and let $\mathcal F=L^1(\X,\omega)$.
Assume either $\supp\omega=\X$, or else every finite dictionary below is understood after quotienting $C(\X)$ into $L^1(\omega)$ and is injective in $L^1(\omega)$. Fix finite-dimensional spaces
\[
V_n=\operatorname{span}\{\psi_{n,1},\ldots,\psi_{n,d_n}\}\subset C(\X),
\]
where $n\in \mathbb{N}$, such that:
\begin{enumerate}[label=(D\arabic*),leftmargin=*]
\item $V_n\subseteq V_{n+1}$, $\One\in V_n$ for every $n$, and
$\bigcup_n V_n$ is dense in $L^1(\omega)$.
\item Writing $\Psi_n c:=\sum_{j=1}^{d_n}c_j\psi_{n,j}$, one has
$\Psi_n c\ne0$ in $L^1(\omega)$ whenever $c\ne0$.
\item There is a fixed quadrature scheme
\[
Q_m(\varphi):=\sum_{P\in\mathcal P_m}\omega(P)\varphi(x_P),\qquad \varphi\in C(\X),
\]
with mesh$(\mathcal P_m)\to0$, independent of $F$, such that $Q_m(\varphi)\to\int_\X \varphi\,d\omega$
\(\varphi\in C(\X)\), and such that the convergence is uniform on compact subsets of \(C(\X)\) in the uniform norm.
\item For every admissible continuous $F:\X \to \X$, the Koopman operator is bounded on
$L^1(\omega)$.
\end{enumerate}
\end{assumption}

\begin{definition}[Coefficient-sphere residuals]\label{def:coefficient-residual-sp}
For $z\in\C$, $F:\X \to \X$, and $n,m\in\N$, define
\[
r_{n,m}(z,F) :=
\min_{\|c\|_2=1} \frac{Q_m\bigl(|(\Psi_n c)\circ F-z\Psi_n c|\bigr)} {\|\Psi_n c\|_{L^1(\omega)}}.
\]
The denominator is independent of $F$ and is strictly positive on the compact coefficient sphere $\{c:\|c\|_2=1\}$ by \cref{ass:dictionary-quadrature-Vn}(D2).
\end{definition}

\begin{definition}[Finite grid outputs]
Let
\[
G_N:=\left\{\frac{k+i\ell}{N}:k,\ell\in\mathbb Z,\,
\left|\frac{k+i\ell}{N}\right|\le N\right\}\cup\{1\}.
\]
For \(N,m\in\mathbb N\), define
\[
r_{N,m}(z,F):=
\min_{\|c\|_2=1}
\frac{
Q_m\!\left(\left|(\Psi_Nc)\circ F-z\Psi_Nc\right|\right)
}{
\|\Psi_Nc\|_{L^1(\omega)}
}.
\]
For \(\varepsilon>0\), set
\[
\Gamma^\varepsilon_{N,m}(F)
:=
\{z\in G_N:r_{N,m}(z,F)<\varepsilon-N^{-1}\}\cup\{1\},
\]
whenever \(\varepsilon-N^{-1}>0\), and set
\[
\Gamma^\varepsilon_{N,m}(F):=\{1\}
\]
otherwise.
\end{definition}

The proof of the upper bounds in \cite[Prop.~4.3 and~4.6, Lem.~4.7, Thm.~4.8-4.9, Cor.~4.10-4.11]{sorg2025solvability} densifies to i) a nested family of finite-dimensional continuous trial spaces whose union is dense in the observable space; ii) positivity of the \(L^p\)-norm on the finite-dimensional coefficient sphere, so that the denominator in the coefficient-sphere residuals is bounded away from zero; iii) convergence of tagged quadrature residuals on compact families of continuous integrands; iv) finite-query dependence of the quadrature residuals on the point-evaluation table of \(F\) at the quadrature tags; v) the finite-grid threshold-stabilization argument used to avoid boundary oscillation at strict residual thresholds; vi) the compact-target identities
\[
        C_{\mathrm{ap},\varepsilon} = \bigcap_{q=1}^{\infty}R_{\mathrm{ap},\varepsilon+2^{-q}},\quad
        \sigma_{\mathrm{ap}}=
        \bigcap_{q=1}^{\infty} R_{\mathrm{ap},2^{-q}}.
\]
Therefore the main part is to show that this is also the case for $p=1$, which is true at least under \cref{ass:dictionary-quadrature-Vn}.

\begin{proposition}[\(L^1\) endpoint residual upper-bound]\label{prop:L1-endpoint-residual-Vn}
Assume \cref{ass:dictionary-quadrature-Vn}. Let \(\Omega\) be one of the continuous
input classes on which \(\K_F\) is bounded on \(L^1(\omega)\), and let the oracle be the
\(\Delta_1\) device consisting of point evaluations of \(F\) and the fixed quadrature data. Then the continuous-dictionary quadrature-residual construction of
\cite[Sec.~4-5]{sorg2025solvability} applies at the endpoint \(p=1\), with \(L^1(\omega)\) in place of \(L^p(\omega)\), and gives the target-split upper bounds
\[
\mathrm{SCI}_G\!\left(F\mapsto R_{\mathrm{ap},\varepsilon}(\K_F)\right)
\le
\begin{cases}
2,&\Omega\text{ has no fixed modulus of continuity},\\
1,&\Omega\text{ has a fixed modulus of continuity},
\end{cases}
\]
and
\[
\mathrm{SCI}_G\!\left(F\mapsto C_{\mathrm{ap},\varepsilon}(\K_F)\right), \quad
\mathrm{SCI}_G\!\left(F\mapsto\sigma_{\mathrm{ap}}(\K_F)\right)
\le
\begin{cases}
3,&\Omega\text{ has no fixed modulus of continuity},\\
2,&\Omega\text{ has a fixed modulus of continuity}.
\end{cases}
\]
\end{proposition}

\begin{proof}
By \cref{prop:boundednessL1}, every \(F\in\Omega\) gives a bounded Koopman operator
\[
        \K_F:L^1(\omega)\to L^1(\omega).
\]
Thus the lower norm
\[
        \nu_F(z) = \inf_{\|g\|_{L^1}=1}\|(\K_F-zI)g\|_{L^1}
\]
is well-defined.

We now check that the residual-tower proof of the \(1<p<\infty\) theory transfers to \(p=1\). Indeed, all of i)-vi) hold under \cref{ass:dictionary-quadrature-Vn} with \(L^1(\omega)\) in place of \(L^p(\omega)\). (D1) gives the nested dense dictionaries, (D2) gives
\[
        \min_{\|c\|_2=1}\|\Psi_n c\|_{L^1(\omega)}>0
\]
for every \(n\), and (D3) gives the required quadrature convergence on compact subsets of \(C(\X)\). For fixed \(n,z,F\), the family
\[
        \left\{\left|(\Psi_n c)\circ F-z\Psi_n c\right| : \|c\|_2=1\right\}\subset C(\X)
\]
is compact, because it is the continuous image of the compact coefficient sphere. Therefore
\[
        r_{n,m}(z,F)\longrightarrow r_n(z,F)
\]
as \(m\to\infty\), exactly as in \cite[Prop.~4.6]{sorg2025solvability}.

The convergence
\[
        r_n(z,F)\downarrow \nu_F(z)
\]
follows from the density of \(\bigcup_nV_n\) in \(L^1(\omega)\) and boundedness of
\(\K_F\). Explicitly, for every \(f\in L^1(\omega)\) with \(\|f\|_1=1\), choose
\(g\in V_n\) with \(g\ne0\) and \(g/\|g\|_1\to f\) in \(L^1\). Then
\[
        \|(\K_F-zI)(g/\|g\|_1)\|_1 \to \|(\K_F-zI)f\|_1.
\]
Taking infima gives \(r_n(z,F)\downarrow\nu_F(z)\). Moreover, both \(r_n(\cdot,F)\)
and \(\nu_F(\cdot)\) are \(1\)-Lipschitz, since for normalized \(g\)
\[
        \left| \|(\K_F-zI)g\|_1-\|(\K_F-wI)g\|_1 \right| \le |z-w|.
\]
Hence Dini's theorem gives uniform convergence of \(r_n(\cdot,F)\) to
\(\nu_F(\cdot)\) on compact \(z\)-sets, just as in \cite[Prop.~4.3]{sorg2025solvability}.

For fixed \(n,m\), the residual values use only finitely many values of \(F\), namely
the values of \(F\) at the quadrature tags appearing in the finitely many quadrature
rules used up to level \(m\). Hence the finite-grid residual outputs are type-\(G\)
general algorithms, as in \cite[Lem.~4.5]{sorg2025solvability}. The finite-grid
threshold-selection lemma \cite[Lem.~4.7]{sorg2025solvability} then gives an eventually
constant inner quadrature limit in the no-modulus case. Thus the proof of \cite[Thm.~4.8]{sorg2025solvability} gives
\[
\mathrm{SCI}_G \left(F\mapsto R_{\mathrm{ap},\varepsilon}(\K_F)\right)\le 2
\]
on no-modulus classes.

If \(\Omega\) has a fixed modulus of continuity, the same Arzelà-Ascoli argument as in
\cite[Prop.~4.6 and Thm.~4.9]{sorg2025solvability} gives a quadrature schedule depending
only on the dictionary, the modulus, and the grid level, not on \(F\). Hence the
regularized target is computed with one fewer limit, i.e.
\[
\mathrm{SCI}_G \left(F\mapsto R_{\mathrm{ap},\varepsilon}(\K_F)\right)\le 1.
\]

Finally, by \cref{rem:target-relations},
\[
        C_{\mathrm{ap},\varepsilon}(\K_F) = \bigcap_{q=1}^{\infty} R_{\mathrm{ap},\varepsilon+2^{-q}}(\K_F), \quad \sigma_{\mathrm{ap}}(\K_F) =
        \bigcap_{q=1}^{\infty} R_{\mathrm{ap},2^{-q}}(\K_F).
\]
For decreasing nonempty compact sets, Hausdorff convergence to the intersection follows
from compactness. Therefore passing from the regularized target to the closed
fixed-\(\varepsilon\) target or to \(\sigma_{\mathrm{ap}}\) adds one outer limit. This gives
\[
\mathrm{SCI}_G \left(F\mapsto C_{\mathrm{ap},\varepsilon}(\K_F)\right),\quad
\mathrm{SCI}_G \left(F\mapsto\sigma_{\mathrm{ap}}(\K_F)\right)
\le
\begin{cases}
3,&\Omega\text{ has no fixed modulus of continuity},\\
2,&\Omega\text{ has a fixed modulus of continuity}.
\end{cases}
\]
\end{proof}

\section{\texorpdfstring{$L^\infty$}{Loo} Approximate Point Spectrum: Block Formula And No-Go Theorem}

\begin{lemma}[Exact \(L^\infty\) block lower-norm formula]\label{lem:block-lower-norm-exact}
Let \((\X,\omega)\) be a finite measure space and let
\[
        \X= \bigsqcup_{j\in J} \X_j
\]
be a countable measurable partition with
\[
        0<\omega(\X_j)<\infty, \, j\in J.
\]
Let \(F:\X \to \X\) be measurable and nonsingular, and assume that each \(\X_j\) is \(F\)-invariant modulo null sets:
\[
        \omega\bigl(\X_j\setminus F^{-1}(\X_j)\bigr)=0
\]
for $j\in J$. Put for short \(\omega_j:=\omega|_{\X_j}\). For each \(j\), define
\[
        \K_j:=\K_{F|\X_j}:L^\infty(\X_j,\omega_j) \to L^\infty(\X_j,\omega_j)
\]
by
\[
        \K_j f := f\circ F \quad\text{on } \X_j,
\]
where the expression is understood modulo \(\omega_j\)-null sets. Then

\begin{enumerate}
\item The restriction map
\[
        \Phi:L^\infty(\X,\omega)\to \prod_{j\in J}^{\ell^\infty} L^\infty(\X_j,\omega_j), \qquad
        \Phi f := (f|_{\X_j})_{j\in J},
\]
is a well-defined linear isometric isomorphism, where
\[
        \left\|(f_j)_{j\in J}\right\|_{\ell^\infty} := \sup_{j\in J}\|f_j\|_{L^\infty(\omega_j)} .
\]

\item Under this identification, \(\K_F\) acts block-diagonally, i.e.
\[
        \Phi \K_F\Phi^{-1} = \prod_{j\in J}^{\ell^\infty} \K_j .
\]

\item If
\[
        \nu_F(\lambda) := \inf_{\|f\|_{L^\infty(\X)}=1} \|(\K_F-\lambda I)f\|_{L^\infty(\X)}
\]
and
\[
        \nu_j(\lambda) := \inf_{\|g\|_{L^\infty(\X_j)}=1} \|(\K_j-\lambda I)g\|_{L^\infty(\X_j)},
\]
then for every \(\lambda\in\mathbb C\),
\[
        \nu_F(\lambda) =
        \inf_{j\in J}\nu_j(\lambda).
\]

\item Consequently,
\[
        \sigma_{\mathrm{ap}}(\K_F) = \left\{ \lambda\in\mathbb C: \inf_{j\in J}\nu_j(\lambda)=0 \right\}.
\]
In particular,
\[
        \overline{ \bigcup_{j\in J}\sigma_{\mathrm{ap}}(\K_j)} \subseteq \sigma_{\mathrm{ap}}(\K_F).
\]
\end{enumerate}
\end{lemma}

\begin{proof}
First note that the restricted Koopman operators \(\K_j\) are well-defined. Indeed,
if \(A\subseteq \X_j\) is \(\omega_j\)-null, then \(A\) is \(\omega\)-null, and global nonsingularity gives
\[
        \omega(F^{-1}(A))=0.
\]
Hence
\[
        \omega_j\bigl(\X_j\cap F^{-1}(A)\bigr)=0.
\]
Thus composition with \(F\) preserves \(\omega_j\)-a.e. equivalence classes on the block \(\X_j\). The invariance assumption
\[
        \omega\bigl(\X_j\setminus F^{-1}(\X_j)\bigr)=0
\]
ensures that, for \(\omega_j\)-a.e. \(x\in \X_j\), the point \(F(x)\) again belongs to
\(\X_j\). Therefore \(\K_j\) is exactly the block restriction of \(\K_F\).

\noindent\textbf{(1):}
If \(f=g\) \(\omega\)-a.e. on \(\X\), then
\[
        f|_{\X_j}=g|_{\X_j} \quad \omega_j\text{-a.e.}
\]
for every \(j\). Hence the restriction map
\[
        \Phi f=(f|_{\X_j})_{j\in J}
\]
is well-defined on \(L^\infty\)-classes. Let \(f\in L^\infty(\X,\omega)\). Put
\[
        M:=\sup_{j\in J}\|f|_{\X_j}\|_{L^\infty(\omega_j)} .
\]
If \(a>\|f\|_{L^\infty(\omega)}\), then \(|f|\le a\) \(\omega\)-a.e. on \(\X\), hence
\[
        |f|\le a \quad \omega_j\text{-a.e. on } \X_j
\]
for every \(j\). Thus
\[
        M\le \|f\|_{L^\infty(\omega)}.
\]
Conversely, if \(a>M\), then for every \(j\),
\[
        |f|\le a \quad \omega_j\text{-a.e. on } \X_j.
\]
Since the partition is countable, the union of the exceptional null sets is still
\(\omega\)-null. Therefore \(|f|\le a\) \(\omega\)-a.e. on \(\X\), and so
\[
        \|f\|_{L^\infty(\omega)}\le a.
\]
Letting \(a\downarrow M\) gives
\[
        \|f\|_{L^\infty(\omega)} =
        \sup_{j\in J}\|f|_{\X_j}\|_{L^\infty(\omega_j)}.
\]
Thus \(\Phi\) is an isometry into the \(\ell^\infty\)-product.

It remains to see that \(\Phi\) is surjective. Let
\[
        (f_j)_{j\in J}\in \prod_{j\in J}^{\ell^\infty} L^\infty(\X_j,\omega_j), \, M:=\sup_j\|f_j\|_\infty<\infty.
\]
Choose measurable representatives of the \(f_j\), and define
\[
        f(x):=f_j(x) \quad\text{if } x\in \X_j.
\]
Because the partition is countable, \(f\) is measurable. Moreover,
\[
        |f|\le M \quad \omega\text{-a.e.},
\]
so \(f\in L^\infty(\X,\omega)\), and \(\Phi f=(f_j)_j\). Hence \(\Phi\) is a linear isometric isomorphism.

\noindent\textbf{(2):}
Let \(f\in L^\infty(\X,\omega)\). For fixed \(j\), the block invariance assumption gives
\[
        F(x)\in \X_j \quad\text{for }\omega_j\text{-a.e. } x\in \X_j.
\]
Hence, for \(\omega_j\)-a.e. \(x\in \X_j\),
\[
        (\K_F f)(x) = f(F(x)) = (f|_{\X_j})(F(x))= \K_j(f|_{\X_j})(x).
\]
Therefore
\[
        (\K_F f)|_{\X_j}=\K_j(f|_{\X_j}) \quad\text{in } L^\infty(\X_j,\omega_j),
\]
which proves
\[
        \Phi \K_F\Phi^{-1} = \prod_{j\in J}^{\ell^\infty} \K_j .
\]

\noindent\textbf{(3):}
Let
\[
        T:=\Phi \K_F\Phi^{-1} = \prod_{j\in J}^{\ell^\infty} \K_j .
\]
Then for \(u=(u_j)_{j\in J}\) in the \(\ell^\infty\)-product,
\[
        (T-\lambda I)u = ((\K_j-\lambda I)u_j)_{j\in J},
\]
and hence
\[
        \|(T-\lambda I)u\| =
        \sup_{j\in J} \|(\K_j-\lambda I)u_j\|.
\]

We first prove
\[
        \nu_F(\lambda)\le \inf_j \nu_j(\lambda).
\]
Fix \(j_0\in J\) and \(\eta>0\). Choose
\[
        g\in L^\infty(\X_{j_0},\omega_{j_0}), \quad \|g\|_\infty=1,
\]
such that
\[
        \|(\K_{j_0}-\lambda I)g\|_\infty \le \nu_{j_0}(\lambda)+\eta.
\]
Extend \(g\) by zero to all other blocks. In the product notation this is the vector \(u=(u_j)_j\) with
\[
        u_{j_0}=g, \quad u_j=0
\]
for $j\ne j_0$. Then \(\|u\|=1\), and
\[
        \|(T-\lambda I)u\| =
        \|(\K_{j_0}-\lambda I)g\| \le \nu_{j_0}(\lambda)+\eta.
\]
Taking the infimum over \(j_0\) and then letting \(\eta\downarrow0\) gives
\[
        \nu_F(\lambda)\le \inf_j\nu_j(\lambda).
\]

Conversely, put
\[
        \alpha:=\inf_{j\in J}\nu_j(\lambda).
\]
Let \(u=(u_j)_{j\in J}\) satisfy \(\|u\|=1\). Fix \(0<\eta<1\). Since
\[
        \sup_j\|u_j\|=1,
\]
there exists \(j_\eta\in J\) such that
\[
        \|u_{j_\eta}\|>1-\eta.
\]
Therefore
\[
\begin{aligned}
        \|(T-\lambda I)u\| &= \sup_j\|(\K_j-\lambda I)u_j\| \\
        &\ge \|(K_{j_\eta}-\lambda I)u_{j_\eta}\| \\
        &\ge \nu_{j_\eta}(\lambda)\,\|u_{j_\eta}\| \\
        &\ge \alpha(1-\eta).
\end{aligned}
\]
Letting \(\eta\downarrow0\), we obtain
\[
        \|(T-\lambda I)u\|\ge \alpha.
\]
Taking the infimum over all \(\|u\|=1\) gives
\[
        \nu_F(\lambda)\ge \inf_j\nu_j(\lambda).
\]
Thus
\[
        \nu_F(\lambda)=\inf_{j\in J}\nu_j(\lambda).
\]

\noindent\textbf{(4):}
By definition,
\[
        \lambda\in\sigma_{\mathrm{ap}}(\K_F) \quad\Longleftrightarrow\quad \nu_F(\lambda)=0.
\]
Using the lower-norm formula, this is equivalent to
\[
        \inf_{j\in J}\nu_j(\lambda)=0.
\]
Hence
\[
        \sigma_{\mathrm{ap}}(\K_F) =
        \left\{\lambda\in\mathbb C: \inf_{j\in J}\nu_j(\lambda)=0 \right\}.
\]

Finally, for each fixed \(j\),
\[
        \sigma_{\mathrm{ap}}(\K_j) =
        \{\lambda:\nu_j(\lambda)=0\} \subseteq \{\lambda:\inf_i\nu_i(\lambda)=0\} =
        \sigma_{\mathrm{ap}}(\K_F).
\]
The set \(\sigma_{\mathrm{ap}}(\K_F)\) is closed, because
\[
        \lambda\mapsto \nu_F(\lambda)
\]
is \(1\)-Lipschitz. Therefore
\[
        \overline{ \bigcup_{j\in J}\sigma_{\mathrm{ap}}(\K_j)} \subseteq \sigma_{\mathrm{ap}}(\K_F).
\]
The proof is complete.
\end{proof}

\begin{definition}[Binary trees, finite patterns, and Silver trees]\label{def:trees-patterns-silver}
Let \(2^{<\mathbb N}\) denote the set of all finite binary words.  If
\(\sigma\in 2^{<\mathbb N}\), write \(|\sigma|\) for its length and write
\[
        \sigma=(\sigma_1,\ldots,\sigma_{|\sigma|}).
\]
Let
\[
        \Tree_2 :=
        \{S\subseteq 2^{<\mathbb N}: \sigma\in S,\ \tau\preceq\sigma \Rightarrow \tau\in S\}
\]
be the space of downward closed binary trees. We regard \(\Tree_2\) as a closed
subspace of the Cantor space \(2^{2^{<\mathbb N}}\), equipped with the product
topology. Equivalently, a basic clopen condition specifies membership or non-membership
of finitely many finite words.

For \(m\in \mathbb{N}\) and
\[
        u=(u_1,\ldots,u_m)\in\{0,1,\ast\}^m,
\]
define
\[
        \Sigma(u) := \{\tau\in2^m : u_i\in\{0,1\}\Rightarrow \tau_i=u_i \text{ for all }1\le i\le m\}.
\]
Define the star-count
\[
        k(u):=\#\{1\le i\le m : u_i=\ast\}.
\]
Then
\[
        |\Sigma(u)|=2^{k(u)}.
\]
For a set \(A\subseteq\mathbb N\), we say that \(A\) is coinfinite if \(\mathbb N\setminus A\) is infinite. For a coinfinite set \(A\subseteq\mathbb N\) and a point \(x\in2^{\mathbb N}\), define the Silver tree
\[
        S_{A,x} := \{\sigma\in2^{<\mathbb N} : \sigma_i=x_i \text{ whenever }1\le i\le|\sigma| \text{ and } i\in A\}.
\]
Define the Silver-tree predicate
\[
        V := \{S\in\Tree_2 : \exists A\subseteq\mathbb N \text{ coinfinite } \exists x\in2^{\mathbb N}\text{ such that }S_{A,x}\subseteq S\}.
\]
\end{definition}

\begin{proposition}[Descriptive-set-theoretic status of \(V\) {\cite[Sec.~25, Sec.~27A]{Kec95}, \cite[Thm.~3.9]{mazurkiewicz2024idealanalyticsets}}]\label{prop:silver-nonborel}
The set \(V\subseteq\Tree_2\) is analytic and non-Borel. More precisely, the classical ill-founded-tree set Borel-reduces to \(V\).
\end{proposition}

\begin{definition}[Star-priority selected pattern]\label{def:star-priority-pattern}
Fix the priority order
\[
        \ast \prec 0 \prec 1.
\]
For \(S\in\Tree_2\), define recursively
\[
        u_m(S)\in\{0,1,\ast\}^m\cup\{\dagger\},
\]
where $m\in \mathbb{N}_0$, as follows.

Set \(u_0(S)\) to be the empty word. Suppose \(m\in \mathbb{N}\) and \(u_{m-1}(S)\) has already been defined. If
\[
        u_{m-1}(S)=\dagger,
\]
set
\[
        u_m(S):=\dagger.
\]
Otherwise, among the three extensions
\[
        u_{m-1}(S)^\frown\ast,\quad
        u_{m-1}(S)^\frown0,\quad
        u_{m-1}(S)^\frown1,
\]
choose the \(\prec\)-least word \(v\in\{0,1,\ast\}^m\) satisfying
\[
        \Sigma(v)\subseteq S\cap 2^m.
\]
If no such extension exists, set
\[
        u_m(S):=\dagger.
\]
Define
\[
        P_m(S) :=
        \begin{cases}
        \Sigma(u_m(S)),&u_m(S)\ne\dagger,\\
        \varnothing,&u_m(S)=\dagger,
        \end{cases}
\]
and
\[
        k_m(S) :=
        \begin{cases}
        k(u_m(S)),&u_m(S)\ne\dagger,\\
        0,&u_m(S)=\dagger.
        \end{cases}
\]
\end{definition}

\begin{lemma}[Finite-level dependence of \(k_m\)]\label{lem:km-finite-level-dependence}
For every \(m\in \mathbb{N}_0\), the map
\[
        S\longmapsto k_m(S)
\]
is determined by the finite initial subtree
\[
        S\cap 2^{\le m} := S\cap\bigcup_{r=0}^{m}2^r.
\]
Consequently, for every \(K\in\mathbb N\), the set
\[
        \{S\in\Tree_2 : k_m(S)\ge K\}
\]
is clopen in \(\Tree_2\).
\end{lemma}

\begin{proof}
We prove the finite-level dependence by induction on \(m\).

For \(m=0\), \(u_0(S)\) is fixed and independent of \(S\). Suppose the claim is known
up to level \(m-1\). To determine \(u_m(S)\), one first determines \(u_{m-1}(S)\),
which by the induction hypothesis depends only on \(S\cap2^{\le m-1}\). If
\(u_{m-1}(S)=\dagger\), then \(u_m(S)=\dagger\).

Otherwise, one checks the three candidate extensions
\[
        v\in
        \{u_{m-1}(S) \frown\ast,\,
          u_{m-1}(S) \frown0,\,
          u_{m-1}(S) \frown1\}.
\]
For a fixed candidate \(v\), the condition
\[
        \Sigma(v)\subseteq S\cap 2^m
\]
is a finite conjunction of coordinate conditions $\tau\in S$ for $\tau\in\Sigma(v)$. Hence it depends only on \(S\cap2^m\).  Therefore \(u_m(S)\), \(P_m(S)\), and
\(k_m(S)\) depend only on \(S\cap2^{\le m}\).

Since \(S\cap2^{\le m}\) is a finite set of coordinates in the product topology on
\(\Tree_2\), every condition determined by \(S\cap2^{\le m}\) is a finite union of basic
clopen cylinders. Thus \(\{S:k_m(S)\ge K\}\) is clopen.
\end{proof}

\begin{definition}[Finite-period tree-to-dynamics map]
\label{def:finite-period-tree-dynamics}
Let $\X := 2^{\mathbb N}$ with the standard Cantor ultrametric and Bernoulli product probability measure
\[
        \mu:=\left(\frac12\delta_0+\frac12\delta_1\right)^{\otimes\mathbb N}.
\]
Let
\[
        x_\infty:=(1,1,1,\ldots).
\]
For \(m\in \mathbb{N}\), define the clopen block
\[
        Y_m := \{x\in \X : x_1=\cdots=x_{m-1}=1,\ x_m=0\}.
\]
Then
\[
        \X = \{x_\infty\} \sqcup \bigsqcup_{m\in \mathbb{N}} Y_m.
\]
For \(\tau=(\tau_1,\ldots,\tau_m)\in2^m\), define the clopen cylinder
\[
        C_{m,\tau} := \{x\in Y_m : (x_{m+1},\ldots,x_{2m})=\tau\}.
\]
Thus
\[
        Y_m=\bigsqcup_{\tau\in2^m}C_{m,\tau}, \quad
        \mu(C_{m,\tau})=2^{-2m}.
\]

Let \(P\subseteq2^m\). If \(P\ne\varnothing\), let
\[
        \operatorname{succ}_{m,P}: P \to P
\]
be the cyclic successor map on \(P\) with respect to lexicographic order. Thus
\(\operatorname{succ}_{m,P}\) is one cycle of length \(|P|\).

On the complement \(2^m\setminus P\), define the dump involution
\[
        \pi_{m,P} : 2^m\setminus P\to 2^m\setminus P
\]
as follows. List
\[
        2^m\setminus P=\{\rho_1<_{\operatorname{lex}}\rho_2<_{\operatorname{lex}} \cdots<_{\operatorname{lex}}\rho_\ell\}.
\]
Set
\[
        \pi_{m,P}(\rho_{2a-1})=\rho_{2a},
        \qquad
        \pi_{m,P}(\rho_{2a})=\rho_{2a-1}
\]
for $1\le a\le\lfloor \ell/2\rfloor$, and, if \(\ell\) is odd, set
\[
        \pi_{m,P}(\rho_\ell)=\rho_\ell.
\]

For \(S\in\Tree_2\), define the permutation
\[
        \theta_{m,S} : 2^m \to 2^m
\]
by
\[
        \theta_{m,S}(\tau) :=
        \begin{cases}
        \operatorname{succ}_{m,P_m(S)}(\tau),
        &P_m(S)\ne\varnothing\text{ and }\tau\in P_m(S),\\
        \pi_{m,P_m(S)}(\tau),
        &\tau\notin P_m(S).
        \end{cases}
\]
The first case is absent if \(P_m(S)=\varnothing\). Define
\[
        F_S: \X \to \X, \quad F_S(x_\infty):=x_\infty,
\]
and, for \(x\in Y_m\), writing
\[
        \tau(x):=(x_{m+1},\ldots,x_{2m})\in2^m,
\]
set \(F_S(x)\) to be the unique point in \(Y_m\) satisfying
\[
        (F_S(x))_r=x_r
\]
for $1\le r\le m$ or $r>2m$, and
\[
        ((F_S(x))_{m+1},\ldots,(F_S(x))_{2m}) = \theta_{m,S}(\tau(x)).
\]
Equivalently,
\[
        F_S(C_{m,\tau})=C_{m,\theta_{m,S}(\tau)}.
\]
\end{definition}

\begin{lemma}[Approximate Point-Spectrum of the finite-period tree scheme]\label{lem:finite-period-tree-scheme-spectrum}
Let \(S\in\Tree_2\), and let \(F_S\) be the map of \cref{def:finite-period-tree-dynamics}. Let
\[
        \K_{F_S}:L^\infty(\X,\mu)\to L^\infty(\X,\mu),\quad \K_{F_S}f=f\circ F_S,
\]
be the associated Koopman operator. For \(q\in \mathbb{N}\), write
\[
        U_q:=\{\lambda\in\mathbb T : \lambda^q=1\}.
\]
Then
\[
        \sigma_{\mathrm{ap}}(\K_{F_S}) =
        \overline{ \{1,-1\}\cup \bigcup_{m\in \mathbb{N}} U_{2^{k_m(S)}} }.
\]
\end{lemma}

\begin{proof}
First observe that \(F_S\) is a measure-preserving homeomorphism. Indeed, on every
clopen block \(Y_m\), it merely permutes the equal-measure clopen cylinders $C_{m,\tau}$ with $\tau\in 2^m$ and leaves all coordinates outside \(m+1,\ldots,2m\) unchanged.  
The blocks \(Y_m\) have diameters tending to zero and accumulate only at \(x_\infty\), which is fixed.
Hence the piecewise definition glues to a homeomorphism of \(\X\), and the permutation
of equal-measure cylinders preserves \(\mu\).

We decompose \(\X \setminus \{x_\infty\}\) into positive-measure finite periodic
components. The point \(\{x_\infty\}\) is \(\mu\)-null and therefore does not affect
\(L^\infty(\X,\mu)\). Fix \(m\in \mathbb{N}\). If \(P_m(S)\ne\varnothing\), then the family of cylinders
\[
        \{C_{m,\tau}:\tau\in P_m(S)\}
\]
forms one \(F_S\)-cycle of length
\[
        |P_m(S)|=|\Sigma(u_m(S))|=2^{k_m(S)}.
\]
The complement
\[
        2^m\setminus P_m(S)
\]
is decomposed by \(\pi_{m,P_m(S)}\) into cycles of length \(2\), with possibly one
fixed point if the complement has odd cardinality. Thus every remaining positive-measure
periodic component inside \(Y_m\) has period \(1\) or \(2\).

We recall the elementary finite-cycle computation. Suppose
\[
        Z=Z_0\sqcup\cdots\sqcup Z_{L-1}
\]
modulo null sets, with \(0<\nu(Z_r)<\infty\), and suppose
\[
        F(Z_r)=Z_{r+1\!\!\!\pmod L}, \qquad F^L=\mathrm{id}
\]
modulo null sets on \(Z\). Write for short \(U:=\K_F\) on \(L^\infty(Z,\nu)\). Then \(U^L=I\). If \(\lambda^L\ne1\), then
\[
        (U-\lambda I) \left(U^{L-1}+\lambda U^{L-2}+\cdots+\lambda^{L-1}I\right) = (1-\lambda^L)I,
\]
so \(U-\lambda I\) is invertible. Hence
\[
        \lambda\notin\sigma_{\mathrm{ap}}(U).
\]
If \(\lambda^L=1\), define
\[
        f|_{Z_r}:=\lambda^{-r}.
\]
Then \(f\in L^\infty(Z,\nu)\), \(f\ne0\), and
\[
        Uf=\lambda^{-1}f.
\]
Since \(U_L\) is closed under inversion, every element of \(U_L\) is an eigenvalue.
Therefore
\[
        \sigma_{\mathrm{ap}}(U)=\sigma_p(U)=U_L.
\]

Let
\[
        A_S := \overline{ \{1,-1\}\cup \bigcup_{m\ge1}U_{2^{k_m(S)}} }.
\]
We first prove
\[
        A_S\subseteq\sigma_{\mathrm{ap}}(\K_{F_S}).
\]
The point \(1\) is an eigenvalue because \(\K_{F_S} \One = \One\). For each \(m\) with
\(P_m(S)\ne\varnothing\), the \(P_m(S)\)-cycle gives all eigenvalues in
\(U_{2^{k_m(S)}}\). If \(P_m(S)=\varnothing\), then \(k_m(S)=0\) and
\(U_{2^{k_m(S)}}=U_1=\{1\}\).

It remains only to justify \(-1\). If \(k_m(S)\in \mathbb{N}\) for some \(m\), then
\[
        -1\in U_{2^{k_m(S)}}.
\]
If \(k_m(S)=0\) for every \(m\), then consider \(m=2\). In this case
\[
        |P_2(S)|\in\{0,1\},
\]
so
\[
        |2^2\setminus P_2(S)|\in\{4,3\}.
\]
The dump involution on \(2^2\setminus P_2(S)\) therefore has at least one two-cycle.
That two-cycle gives \(-1\) as an eigenvalue on the corresponding positive-measure
periodic component. Hence \(-1\in\sigma_p(\K_{F_S})\) in all cases. Thus the set
\[
        \{1,-1\}\cup\bigcup_{m\in \mathbb{N}} U_{2^{k_m(S)}}
\]
is contained in \(\sigma_p(\K_{F_S})\), hence in \(\sigma_{\mathrm{ap}}(\K_{F_S})\).
Since \(\sigma_{\mathrm{ap}}(\K_{F_S})\) is closed, we get
\[
        A_S\subseteq\sigma_{\mathrm{ap}}(\K_{F_S}).
\]

We now prove the reverse inclusion. Let $\lambda\notin A_S$. We show that
\[
        \lambda\notin\sigma_{\mathrm{ap}}(\K_{F_S}).
\]

Let \(Z\) be any positive-measure finite periodic component in the above decomposition,
and let \(L_Z\) be its period. Then
\[
        L_Z\in \{1,2\}\cup\{2^{k_m(S)} : P_m(S)\ne\varnothing\}.
\]
Let \(U_Z\) be the Koopman operator on \(L^\infty(Z)\). We have
\[
        \nu_{\K_{F_S}}(\lambda) = \inf_Z \nu_{U_Z}(\lambda),
\]
where the infimum ranges over all positive-measure finite periodic components and
\[
        \nu_T(\lambda):= \inf_{\|f\|_\infty=1}\|(T-\lambda I)f\|_\infty.
\]
This follows directly from the isometric identification of \(L^\infty(\X,\mu)\) with the
\(\ell^\infty\)-product over the components. If \(|\lambda|\ne1\), then each \(U_Z\) is an invertible isometry, and therefore
\[
        \|(U_Z-\lambda I)f\|_\infty \ge \bigl||\lambda|-1\bigr|\,\|f\|_\infty
\]
for $f\in L^\infty(Z)$. Hence
\[
        \nu_{\K_{F_S}}(\lambda) \ge \bigl||\lambda|-1\bigr| > 0.
\]
Thus \(\lambda\notin\sigma_{\mathrm{ap}}(\K_{F_S})\).

It remains to consider \(|\lambda|=1\). If the sequence \(k_m(S)\) is unbounded, then
\[
        \overline{\bigcup_{m\in \mathbb{N}} U_{2^{k_m(S)}}} = \mathbb T.
\]
Indeed, given \(e^{2\pi i\theta}\in\mathbb T\) and \(\eta>0\), choose \(m\) with
\(2^{-k_m(S)}<\eta\), and then choose an \(p \in \mathbb{Z}\) such that
\[
        \left|\theta-\frac{p}{2^{k_m(S)}}\right| \le 2^{-k_m(S)}.
\]
Thus \(e^{2\pi ip/2^{k_m(S)}} \to e^{2\pi i\theta}\) along suitable choices. Therefore
\(A_S=\mathbb T\), contradicting \(\lambda\notin A_S\). Hence, under the present
assumption \(\lambda\notin A_S\) with \(|\lambda|=1\), the sequence \(k_m(S)\) must be
bounded.

Choose \(K\in\mathbb N_0\) such that
\[
        k_m(S)\le K
\]
for $m\in\mathbb N$. Let
\[
        \mathcal L_S := \{L_Z : Z\text{ is a positive-measure finite periodic component in the above decomposition}\}.
\]
Then \(\mathcal L_S\) is finite and
\[
        \mathcal L_S \subseteq \{1,2,2^0,2^1,\ldots,2^K\}.
\]
Because \(\lambda\notin A_S\), one has
\[
        \lambda^L\ne1
\]
for $L\in\mathcal L_S$. For a finite-cycle component \(Z\) with period \(L_Z\in\mathcal L_S\), the resolvent
formula above gives
\[
        (U_Z-\lambda I)^{-1} = (1-\lambda^{L_Z})^{-1} \left(U_Z^{L_Z-1}+\lambda U_Z^{L_Z-2}+\cdots +\lambda^{L_Z-1}I\right).
\]
Since \(|\lambda|=1\) and \(U_Z\) is an isometry,
\[
        \|(U_Z-\lambda I)^{-1}\| \le \frac{L_Z}{|1-\lambda^{L_Z}|}.
\]
Equivalently,
\[
        \nu_{U_Z}(\lambda) \ge \frac{|1-\lambda^{L_Z}|}{L_Z}.
\]
Taking the minimum over the finite set \(\mathcal L_S\) gives the existence of
\[
        \delta := \min_{L\in\mathcal L_S} \frac{|1-\lambda^L|}{L} > 0.
\]
Therefore
\[
        \nu_{\K_{F_S}}(\lambda) = \inf_Z\nu_{U_Z}(\lambda) \ge \delta > 0.
\]
Hence
\[
        \lambda\notin\sigma_{\mathrm{ap}}(\K_{F_S}).
\]
The proof is complete.
\end{proof}

\begin{proposition}[No Silver hardness from the finite-period block scheme]\label{prop:finite-period-silver-nogo-prop}
Let \(F_S\) be the finite-period tree-to-dynamics map of \cref{def:finite-period-tree-dynamics}, and let \(k_m(S)\) be defined as in \cref{def:star-priority-pattern}; in particular, \(k_m(S)=0\) when \(u_m(S)=\dagger\). Then
\[
        \sigma_{\mathrm{ap}}(\K_{F_S}) = \overline{\{1,-1\}\cup \bigcup_{m\ge1}U_{2^{k_m(S)}}}, \quad U_q:=\{\lambda\in\mathbb T:\lambda^q=1\}.
\]
Consequently, for every non-torsion point \(z_0\in\mathbb T\), that is, for every \(z_0\in\mathbb T\) satisfying
\[
        z_0^q\ne1
\]
for $q\in \mathbb{N}$, one has
\[
        z_0\in\sigma_{\mathrm{ap}}(\K_{F_S}) \quad\Longleftrightarrow\quad \sup_{m \in \mathbb{N}} k_m(S)=\infty.
\]
The pullback is therefore
\[
        \{S\in\Tree_2 : z_0\in\sigma_{\mathrm{ap}}(\K_{F_S})\} = \bigcap_{K=1}^{\infty} \bigcup_{m=1}^{\infty} \{S\in\Tree_2:k_m(S)\ge K\}.
\]
This set is Borel. Hence the finite-period construction
\[
        S\longmapsto F_S
\]
cannot realize the non-Borel Silver-tree predicate \(V\).
\end{proposition}

\begin{proof}
The spectral formula
\[
        \sigma_{\mathrm{ap}}(\K_{F_S}) =
        \overline{\{1,-1\}\cup \bigcup_{m\in \mathbb{N}} U_{2^{k_m(S)}}}
\]
follows directly from \cref{lem:finite-period-tree-scheme-spectrum}. It remains to prove the equivalence for non-torsion points and the Borelness statement.

So fix a non-torsion point $z_0\in\mathbb T$. First assume
\[
        \sup_{m\in \mathbb{N}} k_m(S)=\infty.
\]
Then
\[
        \bigcup_{m\in \mathbb{N}} U_{2^{k_m(S)}}
\]
is dense in \(\mathbb T\). Indeed, for every \(\lambda=e^{2\pi i\theta}\in\mathbb T\)
and every \(r\in\mathbb N\), choose \(m\) with \(k_m(S)\ge r\). Then some
\(2^{k_m(S)}\)-th root of unity lies within angular distance at most
\[
        2^{-k_m(S)}\le 2^{-r}
\]
from \(\lambda\). Hence
\[
        \overline{\bigcup_{m\in \mathbb{N}} U_{2^{k_m(S)}}} = \mathbb T.
\]
By the spectral formula,
\[
        \mathbb T \subseteq \sigma_{\mathrm{ap}}(\K_{F_S}),
\]
and therefore
\[
        z_0\in\sigma_{\mathrm{ap}}(\K_{F_S}).
\]

Conversely, assume
\[
        \sup_{m\in \mathbb{N}} k_m(S)<\infty.
\]
Choose \(K_0\in\mathbb N\) such that
\[
        k_m(S)\le K_0
\]
for $m\in \mathbb{N}$. Then
\[
        \{1,-1\}\cup\bigcup_{m\in \mathbb{N}} U_{2^{k_m(S)}} \subseteq \{1,-1\}\cup\bigcup_{r=0}^{K_0} U_{2^r}.
\]
The right-hand side is finite and consists only of roots of unity. Since \(z_0\) is non-torsion,
\[
        z_0\notin \{1,-1\}\cup\bigcup_{r=0}^{K_0} U_{2^r}.
\]
The finite set on the right is closed, hence the spectral formula gives
\[
        z_0\notin\sigma_{\mathrm{ap}}(\K_{F_S}).
\]
Thus
\[
        z_0\in\sigma_{\mathrm{ap}}(\K_{F_S}) \quad\Longleftrightarrow\quad \sup_{m\in \mathbb{N}} k_m(S)=\infty.
\]
The equality
\[
        \{S : z_0\in\sigma_{\mathrm{ap}}(\K_{F_S})\} = \bigcap_{K=1}^{\infty} \bigcup_{m=1}^{\infty} \{S : k_m(S)\ge K\}
\]
is now just the definition of unboundedness of the integer-valued sequence \((k_m(S))_{m\in \mathbb{N}}\).

By \cref{lem:km-finite-level-dependence}, each set
\[
        \{S : k_m(S)\ge K\}
\]
is clopen in \(\Tree_2\). Therefore
\[
        \bigcup_{m=1}^{\infty} \{S : k_m(S)\ge K\}
\]
is open, and the countable intersection over \(K\) is Borel. Hence the pullback
\[
        \{S : z_0\in\sigma_{\mathrm{ap}}(\K_{F_S})\}
\]
is Borel. By \cref{prop:silver-nonborel}, the Silver-tree predicate \(V\subseteq\Tree_2\) is
not Borel. Therefore this finite-period construction cannot satisfy
\[
        S\in V \quad\Longleftrightarrow\quad z_0\in\sigma_{\mathrm{ap}}(\K_{F_S})
\]
for any non-torsion \(z_0\in\mathbb T\). Indeed, the right-hand side defines a Borel
subset of \(\Tree_2\), while the left-hand side defines the non-Borel set \(V\).
\end{proof}

\Cref{prop:finite-period-silver-nogo-prop} does not rule out \textit{every} possible
\(L^\infty\) approximate-spectrum lower-bound strategy. It rules out the specific
finite-period block strategy: approximate eigenvalues of the \(\ell^\infty\)-product only
record the closure of the finite-period root sets, and for a fixed non-torsion point this
reduces to the Borel condition that the selected periods are unbounded.

\begin{lemma}[Two-block \(L^\infty\) lower-norm formula]\label{lem:two-block-linfty-ap}
Let \((\X,\omega)\) be a finite measure space and let
\[
        \X= \X_0\sqcup \X_1
\]
be a measurable partition modulo null sets, with
\[
        0<\omega(\X_0),\omega(\X_1)<\infty.
\]
Let \(F: \X \to \X\) be measurable and nonsingular, and assume
\[
        \omega(\X_i\setminus F^{-1}(\X_i))=0
\]
where $i=0,1$. Write for short
\[
        \K:=\K_F : L^\infty(\X,\omega) \to L^\infty(\X,\omega)
\]
and let
\[
        \K_i := \K_{F|\X_i}: L^\infty(\X_i,\omega|_{\X_i}) \to L^\infty(\X_i,\omega|_{\X_i})
\]
for $i=0,1$. Then, for every \(\lambda\in\mathbb C\),
\[
        \nu_\K (\lambda) = \min\{\nu_{\K_0}(\lambda),\nu_{\K_1}(\lambda)\}.
\]
Consequently,
\[
        \sigma_{\mathrm{ap}}(\K) = \sigma_{\mathrm{ap}}(\K_0)\cup\sigma_{\mathrm{ap}}(\K_1).
\]
\end{lemma}

\begin{proof}
The restriction map
\[
        \Phi:L^\infty(\X,\omega) \to L^\infty(\X_0,\omega|_{\X_0}) \oplus_\infty L^\infty(\X_1,\omega|_{\X_1}), \quad
        \Phi f:=(f|_{\X_0},f|_{\X_1}),
\]
is a linear isometric isomorphism. Indeed,
\[
        \|f\|_{L^\infty(\X)} = \max\{\|f|_{\X_0}\|_{L^\infty(\X_0)}, \|f|_{\X_1}\|_{L^\infty(\X_1)}\},
\]
because the partition has only two pieces and both pieces have positive measure. The invariance assumptions imply that, under this identification,
\[
        \Phi \K \Phi^{-1}= \K_0 \oplus_\infty \K_1.
\]
Thus, for \(u=(u_0,u_1)\),
\[
        (\K-\lambda I)u = ((\K_0-\lambda I)u_0,(\K_1-\lambda I)u_1)
\]
and
\[
        \|(\K-\lambda I)u\| = \max\{\|(\K_0-\lambda I)u_0\|,\|(\K_1-\lambda I)u_1\|\}.
\]

We first prove
\[
        \nu_\K(\lambda) \le \min\{\nu_{\K_0}(\lambda),\nu_{\K_1}(\lambda)\}.
\]
Fix \(i\in\{0,1\}\) and \(\eta>0\). Choose \(g_i\) with
\[
        \|g_i\|=1, \quad \|(\K_i-\lambda I)g_i\| \le \nu_{\K_i}(\lambda)+\eta.
\]
Extend \(g_i\) by zero to the other block. The resulting vector has norm \(1\), and
its residual for \(\K-\lambda I\) is exactly the residual of \(g_i\) for
\(\K_i-\lambda I\). Hence
\[
        \nu_\K(\lambda) \le \nu_{\K_i}(\lambda)+\eta.
\]
Letting \(\eta\downarrow 0\) and then minimizing over \(i\) gives the desired inequality.

Conversely, let \(u=(u_0,u_1)\) satisfy
\[
        \|u\|=\max\{\|u_0\|,\|u_1\|\}=1.
\]
Choose \(i\in\{0,1\}\) such that \(\|u_i\|=1\). Then
\[
\begin{aligned}
        \|(\K-\lambda I)u\|
        &= \max\{\|(\K_0-\lambda I)u_0\|,\|(\K_1-\lambda I)u_1\|\} \\
        &\ge \|(\K_i-\lambda I)u_i\| \\
        &\ge \nu_{\K_i}(\lambda) \\
        &\ge \min\{\nu_{\K_0}(\lambda),\nu_{\K_1}(\lambda)\}.
\end{aligned}
\]
Taking the infimum over all \(\|u\|=1\) gives
\[
        \nu_\K(\lambda) \ge \min\{\nu_{\K_0}(\lambda),\nu_{\K_1}(\lambda)\}.
\]
Therefore the lower-norm formula holds.

Finally,
\[
        \lambda\in\sigma_{\mathrm{ap}}(\K) \iff \nu_\K(\lambda)=0 \iff \min\{\nu_{\K_0}(\lambda),\nu_{\K_1}(\lambda)\}=0
\]
which is equivalent to
\[
        \lambda\in\sigma_{\mathrm{ap}}(\K_0) \quad \lor \quad \lambda\in\sigma_{\mathrm{ap}}(\K_1).
\]
This proves the spectral union formula.
\end{proof}

\begin{lemma}[A \(2\)-adic translation block has full \(L^\infty\) approximate point spectrum]\label{lem:odometer-block-full-circle}
Let $\Y:=2^{\mathbb N}$ with its standard ultrametric
\[
        d_Y(y,y') :=
        \begin{cases}
        0,&y=y',\\
        2^{-\min\{r\ge1:y_r\ne y'_r\}},&y\ne y',
        \end{cases}
\]
and Bernoulli product measure
\[
        \nu:=\left(\frac12\delta_0+\frac12\delta_1\right)^{\otimes\mathbb N}.
\]
Identify \(\Y\) with the \(2\)-adic integers \(\mathbb Z_2\) by
\[
        \iota(y):=\sum_{r=1}^\infty y_r2^{r-1}.
\]
Let
\[
        \tau: \Y \to \Y, \quad
        \tau:=\iota^{-1} \circ(x\mapsto x+2)\circ\iota .
\]
Then \(\tau\) is a measure-preserving homeomorphism of \((\Y,\nu)\), and
\[
        \sigma_{\mathrm{ap}}(\K_\tau:L^\infty(\Y,\nu)\to L^\infty(\Y,\nu)) = \mathbb T.
\]
\end{lemma}

\begin{proof}
The map \(\iota\) is the standard homeomorphism between \(2^{\mathbb N}\) and
\(\mathbb Z_2\).  ddition by \(2\) is a homeomorphism of \(\mathbb Z_2\), hence
\(\tau\) is a homeomorphism of \(\Y\).

We first prove that \(\tau\) preserves \(\nu\). For a cylinder determined by the first
\(m\) digits, say
\[
        C(a_1,\ldots,a_m) := \{y\in \Y : y_1=a_1,\ldots,y_m=a_m\},
\]
write
\[
        a:=\sum_{r=1}^m a_r2^{r-1}.
\]
Then
\[
        C(a_1,\ldots,a_m) = \{y\in \Y : \iota(y)\equiv a\pmod{2^m}\}.
\]
Therefore
\[
        \tau^{-1}C(a_1,\ldots,a_m) = \{y\in \Y : \iota(y)+2\equiv a\pmod{2^m}\} = \{y\in \Y : \iota(y)\equiv a-2\pmod{2^m}\}.
\]
The latter is again a length-\(m\) cylinder. Hence
\[
        \nu(\tau^{-1}C(a_1,\ldots,a_m)) = 2^{-m} = \nu(C(a_1,\ldots,a_m)).
\]
Since cylinders form a \(\pi\)-system generating the Borel \(\sigma\)-algebra of \(\Y\),
Dynkin's \(\pi\)-\(\lambda\) lemma gives
\[
        \nu(\tau^{-1}B)=\nu(B),
\]
for $B\in\mathcal B(\Y)$. Thus \(\K_\tau f=f\circ\tau\) is an invertible isometry on \(L^\infty(\Y,\nu)\).

We next show
\[
        \sigma_{\mathrm{ap}}(\K_\tau)\subseteq\mathbb T.
\]
If \(|\lambda|>1\), then for every \(f\in L^\infty(\Y,\nu)\),
\[
        \|(\K_\tau-\lambda I)f\|_\infty \ge |\lambda|\|f\|_\infty-\|\K_\tau f\|_\infty = (|\lambda|-1)\|f\|_\infty.
\]
If \(|\lambda|<1\), then
\[
        \|(\K_\tau-\lambda I)f\|_\infty \ge \|\K_\tau f\|_\infty-|\lambda|\|f\|_\infty = (1-|\lambda|)\|f\|_\infty.
\]
In either case, the lower norm of \(\K_\tau-\lambda I\) is positive, so
\[
        \lambda\notin\sigma_{\mathrm{ap}}(\K_\tau).
\]
Hence
\[
        \sigma_{\mathrm{ap}}(\K_\tau)\subseteq\mathbb T.
\]

It remains to prove the reverse inclusion. For \(m \in \mathbb{N}_{\ge 2}\), define the residue map
\[
        s_m: \Y \to\{0,1,\ldots,2^m-1\},\quad
        s_m(y):=\sum_{r=1}^m y_r2^{r-1}.
\]
For \(k\in\mathbb Z\), define
\[
        \chi_{m,k}(y) := \exp\left(\frac{2\pi i k}{2^m}s_m(y)\right).
\]
Then
\[
        \chi_{m,k}\in L^\infty(Y,\nu),
        \qquad
        \|\chi_{m,k}\|_\infty=1.
\]
Since \(\tau\) is addition by \(2\) in \(\mathbb Z_2\),
\[
        s_m(\tau(y))\equiv s_m(y)+2\pmod{2^m}.
\]
Therefore
\[
\begin{aligned}
        (\K_\tau\chi_{m,k})(y)
        &=\chi_{m,k}(\tau(y)) \\
        &=\exp\left(\frac{2\pi ik}{2^m}s_m(\tau(y))\right) \\
        &=\exp\left(\frac{2\pi ik}{2^m}(s_m(y)+2)\right) \\
        &=\exp\left(\frac{2\pi ik}{2^{m-1}}\right)\chi_{m,k}(y).
\end{aligned}
\]
Thus every \(2^{m-1}\)-st root of unity is a an eigenvalue of \(\K_\tau\). Hence
\[
        \bigcup_{q\in \mathbb{N}} U_{2^q} \subseteq \sigma_p(\K_\tau) \subseteq \sigma_{\mathrm{ap}}(\K_\tau), \quad
        U_{2^q}:=\{\zeta\in\mathbb T:\zeta^{2^q}=1\}.
\]
The dyadic roots of unity are dense in \(\mathbb T\). Let \(\lambda\in\mathbb T\).
Choose dyadic roots \(\lambda_q\in U_{2^q}\) with
\[
        \lambda_q \to \lambda.
\]
For each \(q\), choose an eigenfunction \(f_q\in L^\infty(\Y,\nu)\) with
\[
        \|f_q\|_\infty=1, \quad
        \K_\tau f_q=\lambda_q f_q.
\]
Then
\[
        \|(\K_\tau-\lambda I)f_q\|_\infty = |\lambda_q-\lambda|\to 0.
\]
Thus
\[
        \lambda\in\sigma_{\mathrm{ap}}(\K_\tau).
\]
Since \(\lambda\in\mathbb T\) was arbitrary, we obtain
\[
        \mathbb T\subseteq\sigma_{\mathrm{ap}}(\K_\tau).
\]
Together with the previous inclusion, this proves
\[
        \sigma_{\mathrm{ap}}(\K_\tau)=\mathbb T.
\]
\end{proof}

\begin{theorem}[Endpoint \(C^0\)-instability of \(L^\infty\) approximate point spectra]\label{thm:C0-instability}
Let $\X=2^{\mathbb N}$ with its standard ultrametric
\[
        d_\X(x,y) :=
        \begin{cases}
        0,&x=y,\\
        2^{-\min\{r\ge1:x_r\ne y_r\}},&x\ne y,
        \end{cases}
\]
and Bernoulli product probability measure
\[
        \mu:=\left(\frac12\delta_0+\frac12\delta_1\right)^{\otimes\mathbb N}.
\]
There are measure-preserving homeomorphisms
\[
        F^{(n)}: \X \to \X
\]
such that
\[
        F^{(n)} \to \mathrm{id}_\X \qquad\text{uniformly}
\]
and
\[
        \sigma_{\mathrm{ap}}(\K_{\mathrm{id}_\X}: L^\infty(\X,\mu) \to L^\infty(\X,\mu)) = \{1\},
\]
whereas
\[
        \sigma_{\mathrm{ap}}(\K_{F^{(n)}}: L^\infty(\X,\mu) \to L^\infty(\X,\mu)) = \mathbb T
\]
for $n \in \mathbb{N}$. Consequently,
\[
        d_H \left(\sigma_{\mathrm{ap}}(\K_{\mathrm{id}_\X}),\sigma_{\mathrm{ap}}(\K_{F^{(n)}})\right)=2
\]
for $n\in \mathbb{N}$.
\end{theorem}

\begin{proof}
Let
\[
        x_\infty:=(1,1,1,\ldots)\in \X.
\]
For \(n\in \mathbb{N}\), define the clopen cylinder
\[
        U_n := \{x\in \X : x_1=\cdots=x_{n-1}=1,\ x_n=0\}.
\]
Then
\[
        \X = \{x_\infty\}\sqcup\bigsqcup_{n\in \mathbb{N}} U_n,
\]
and
\[
        \mu(U_n)=2^{-n}.
\]
Moreover, every two points in \(U_n\) agree in their first \(n\) coordinates, hence
\[
        \operatorname{diam}(U_n)\le 2^{-(n+1)}.
\]
Let
\[
        \Y:=2^{\mathbb N}
\]
with Bernoulli product measure
\[
        \nu:=\left(\frac12\delta_0+\frac12\delta_1\right)^{\otimes\mathbb N}.
\]
Let
\[
        \tau: \Y \to \Y
\]
be the \(2\)-adic translation by \(2\) from \cref{lem:odometer-block-full-circle}. Thus \(\tau\) is a measure-preserving homeomorphism and
\[
        \sigma_{\mathrm{ap}}(\K_\tau: L^\infty(\Y,\nu)\to L^\infty(\Y,\nu))= \mathbb T.
\]
For each \(n\in \mathbb{N}\), define the tail homeomorphism
\[
        \pi_n : U_n\to \Y,\quad
        \pi_n(1,\ldots,1,0,t_1,t_2,\ldots):=(t_1,t_2,\ldots).
\]
Equivalently,
\[
        \pi_n(1^{n-1}0 t)=t.
\]
Now define
\[
        F^{(n)}: \X \to \X
\]
by
\[
        F^{(n)}(x) :=
        \begin{cases}
        x,&x\in X\setminus U_n,\\[1mm]
        \pi_n^{-1}\bigl(\tau(\pi_n(x))\bigr),&x\in U_n.
        \end{cases}
\]
We prove the claimed properties.

\medskip

\noindent\textbf{Step 1: \(F^{(n)}\) is a measure-preserving homeomorphism:}
The sets \(U_n\) and \(\X \setminus U_n\) are clopen. On \(\X\setminus U_n\),
\(F^{(n)}\) is the identity. On \(U_n\), it is the conjugate
\[
        \pi_n^{-1}\circ\tau\circ\pi_n
\]
of the homeomorphism \(\tau\). Since the two pieces are clopen, the piecewise map
\(F^{(n)}\) is a homeomorphism of \(\X\).

We now check measure preservation. For every Borel set \(A\subseteq \Y\),
\[
        \mu(\pi_n^{-1}(A))=2^{-n}\nu(A).
\]
This follows first for cylinders and then for all Borel sets by Dynkin's \(\pi\)-\(\lambda\) lemma.

Let \(B\subseteq \X\) be Borel. Write
\[
        B_0:=B\cap(\X \setminus U_n),\quad
        B_1:=B\cap U_n.
\]
Since \(F^{(n)}\) is the identity on \(\X \setminus U_n\) and maps \(U_n\) onto itself,
\[
        (F^{(n)})^{-1}(B) = B_0 \sqcup \pi_n^{-1}\bigl(\tau^{-1}(\pi_n(B_1))\bigr).
\]
Therefore, using the relation between \(\mu|_{U_n}\) and \(\nu\), and \(\tau_\#\nu=\nu\) we get
\[
\begin{aligned}
        \mu((F^{(n)})^{-1}(B))
        &=\mu(B_0)+\mu \left(\pi_n^{-1}\bigl(\tau^{-1}(\pi_n(B_1))\bigr)\right) \\
        &=\mu(B_0)+2^{-n}\nu \left(\tau^{-1}(\pi_n(B_1))\right) \\
        &=\mu(B_0)+2^{-n}\nu(\pi_n(B_1)) \\
        &=\mu(B_0)+\mu(B_1) \\
        &=\mu(B).
\end{aligned}
\]
Thus
\[
        (F^{(n)})_\#\mu=\mu.
\]

\medskip

\noindent\textbf{Step 2: \(F^{(n)}\to\mathrm{id}_\X\) uniformly:}
If \(x\notin U_n\), then
\[
        F^{(n)}(x)=x.
\]
If \(x\in U_n\), then both \(x\) and \(F^{(n)}(x)\) belong to \(U_n\). Since all points
of \(U_n\) agree in their first \(n\) coordinates,
\[
        d_\X (F^{(n)}(x),x)\le \operatorname{diam}(U_n)\le 2^{-(n+1)}.
\]
Hence
\[
        \sup_{x\in X}d_\X (F^{(n)}(x),x)\le 2^{-(n+1)} \longrightarrow 0.
\]
Therefore
\[
        F^{(n)}\to\mathrm{id}_\X
\]
uniformly.

\medskip

\noindent\textbf{Step 3: the identity has approximate point spectrum \(\{1\}\):}
This is very elementary to see: For the identity map,
\[
        \K_{\mathrm{id}_\X}=I
\]
on \(L^\infty(\X,\mu)\). Hence
\[
        1\in\sigma_{\mathrm{ap}}(\K_{\mathrm{id}_\X}).
\]
If \(\lambda\ne1\), then for every \(f\in L^\infty(\X,\mu)\),
\[
        \|(I-\lambda I)f\|_\infty = |1-\lambda|\,\|f\|_\infty.
\]
Thus the lower norm of \(I-\lambda I\) is \(|1-\lambda|>0\), and so
\[
        \lambda\notin\sigma_{\mathrm{ap}}(\K_{\mathrm{id}_\X}).
\]
Therefore
\[
        \sigma_{\mathrm{ap}}(\K_{\mathrm{id}_\X})=\{1\}.
\]

\medskip

\noindent\textbf{Step 4: each perturbation has full-circle approximate point spectrum:}
For fixed \(n\), consider the two-block partition
\[
        \X= (\X \setminus U_n)\sqcup U_n.
\]
Both blocks have positive measure and are invariant under \(F^{(n)}\). On
\(\X \setminus U_n\), the map \(F^{(n)}\) is the identity. On \(U_n\), the map is
conjugate to \(\tau\) by \(\pi_n\).

By \cref{lem:two-block-linfty-ap},
\[
        \sigma_{\mathrm{ap}}(\K_{F^{(n)}}) = \sigma_{\mathrm{ap}}(\K_{F^{(n)}|\X \setminus U_n}) \cup \sigma_{\mathrm{ap}}(\K_{F^{(n)}|U_n}).
\]
The first block is the identity block, so
\[
        \sigma_{\mathrm{ap}}(\K_{F^{(n)}|\X \setminus U_n}) = \{1\}.
\]

For the second block, define
\[
        J_n:L^\infty(\Y,\nu) \to L^\infty(U_n,\mu|_{U_n}),\quad J_nh:=h\circ\pi_n.
\]
Because
\[
        \mu(\pi_n^{-1}(A))=2^{-n}\nu(A),
\]
the map \(J_n\) is an isometric isomorphism of \(L^\infty\)-spaces. Moreover,
\[
        \K_{F^{(n)}|U_n}J_n = J_n \K_\tau.
\]
Indeed, for \(x\in U_n\),
\[
\begin{aligned}
        (\K_{F^{(n)}|U_n}J_nh)(x)
        &=(J_nh)(F^{(n)}(x)) \\
        &=h(\pi_n(F^{(n)}(x))) \\
        &=h(\tau(\pi_n(x))) \\
        &=(\K_\tau h)(\pi_n(x)) \\
        &=(J_n \K_\tau h)(x).
\end{aligned}
\]
Thus \(\K_{F^{(n)}|U_n}\) and \(\K_\tau\) are isometrically conjugate. Therefore
\[
        \sigma_{\mathrm{ap}}(\K_{F^{(n)}|U_n}) = \sigma_{\mathrm{ap}}(\K_\tau) = \mathbb T
\]
by \cref{lem:odometer-block-full-circle}. Consequently,
\[
        \sigma_{\mathrm{ap}}(\K_{F^{(n)}}) = \{1\}\cup\mathbb T = \mathbb T.
\]

\medskip

\noindent\textbf{Step 5: the Hausdorff distance is \(2\):}
Since \(1\in\mathbb T\),
\[
        \sup_{a\in\{1\}}\operatorname{dist}(a,\mathbb T)=0.
\]
Also,
\[
        \sup_{\zeta\in\mathbb T}\operatorname{dist}(\zeta,\{1\}) = \sup_{|\zeta|=1}|\zeta-1| = 2,
\]
with the maximum attained at \(\zeta=-1\). Therefore
\[
        d_H(\{1\},\mathbb T)=2.
\]
By Steps~3 and 4,
\[
        d_H \left(\sigma_{\mathrm{ap}}(\K_{\mathrm{id}_\X}),\sigma_{\mathrm{ap}}(\K_{F^{(n)}})\right)=d_H(\{1\},\mathbb T)=2.
\]
This completes the proof.
\end{proof}

\section{\texorpdfstring{$L^\infty$}{Loo} Exact Point Spectrum: No Analytic Fixed-Eigenvalue Route}

\begin{proposition}[\(L^\infty\) and \(L^2\) fixed eigenvalues coincide in the measure-preserving case]\label{prop:Linf-L2-point-spec}
Let \((\X,\omega)\) be a finite measure space and let $F: \X \to \X$ be measurable and measure-preserving. Then, for every \(\lambda\in\mathbb C\),
\[
\lambda\in \sigma_{\mathrm p} \left(\K_F : L^\infty(\X,\omega) \to L^\infty(\X,\omega)\right) \quad\Longleftrightarrow\quad \lambda\in \sigma_{\mathrm p} \left(\K_F : L^2(\X,\omega) \to L^2(\X,\omega)\right).
\]
\end{proposition}

\begin{proof}
If \(\omega(\X)=0\), then both \(L^\infty(\X,\omega)\) and \(L^2(\X,\omega)\) are the zero space, so both point spectra are empty. Hence the assertion is trivial. We assume
from now on that
\[
        0<\omega(\X)<\infty .
\]

Since \(F_\#\omega=\omega\), the Koopman operator is an isometry on every
\(L^p(\X,\omega)\), \(1\le p\le\infty\) by \cref{prop:boundednessL1} and \cref{rem:KoopIsoLoo}.

First assume
\[
        \lambda\in \sigma_{\mathrm p} \left(\K_F : L^\infty \to L^\infty\right).
\]
Then there exists
\[
        0\ne g\in L^\infty(\X,\omega)
\]
such that
\[
        g\circ F=\lambda g \qquad\omega\text{-a.e.}
\]
Since \(\omega(\X)<\infty\), we have
\[
        L^\infty(\X,\omega)\subseteq L^2(\X,\omega).
\]
Moreover, \(g\ne 0\) in \(L^\infty\) implies \(g\ne 0\) in \(L^2\). Therefore the same
equation shows that \(g\) is a nonzero \(L^2\)-eigenfunction with eigenvalue \(\lambda\).
Thus
\[
        \lambda\in \sigma_{\mathrm p} \left(\K_F : L^2 \to L^2\right).
\]

Conversely, assume
\[
        \lambda\in \sigma_{\mathrm p} \left(\K_F : L^2\to L^2\right).
\]
Then there exists
\[
        0\ne f\in L^2(\X,\omega)
\]
such that
\[
        f\circ F=\lambda f \qquad\omega\text{-a.e.}
\]
Because \(\K_F\) is an isometry on \(L^2\), we have
\[
        \|f\|_2 = \|f\circ F\|_2 = |\lambda|\,\|f\|_2.
\]
Since \(f\ne 0\), this implies $|\lambda|=1$.

We now construct a bounded eigenfunction from \(f\). Since \(f\ne 0\) in \(L^2\),
\[
        \omega(\{|f|>0\})>0.
\]
Also \(f\) is finite \(\omega\)-a.e. Hence
\[
        \{|f|>0\} = \bigcup_{n=1}^{\infty} \left\{\frac1n \le |f|\le n\right\} \quad\text{modulo null sets}.
\]
Therefore there exists \(n\in\mathbb N\) such that
\[
        E:=\left\{\frac1n\le |f|\le n\right\}
\]
has positive measure. Put
\[
        a:=\frac1n, \quad b:=n.
\]
Thus
\[
        0<a<b<\infty, \quad \omega(E)>0, \quad E=\{a\le |f|\le b\}.
\]
From
\[
        f\circ F=\lambda f \quad \land \quad |\lambda|=1,
\]
we get
\[
        |f|\circ F=|f| \qquad\omega\text{-a.e.}
\]
Consequently,
\[
        \mathbf 1_E\circ F= \mathbf 1_E \qquad\omega\text{-a.e.}
\]
Indeed, outside a null set,
\[
        F(x)\in E \quad\Longleftrightarrow\quad a\le |f(F(x))|\le b \quad\Longleftrightarrow\quad a\le |f(x)|\le b \quad\Longleftrightarrow\quad x\in E.
\]
Now define
\[
        g:=f\,\mathbf 1_E.
\]
Then
\[
        |g|\le b \qquad\omega\text{-a.e.},
\]
so
\[
        g\in L^\infty(\X,\omega).
\]
Moreover, \(g\ne 0\), because
\[
        |g|\ge a \quad\text{on }E
\]
and \(\omega(E)>0\). Finally,
\[
\begin{aligned}
        g\circ F
        &=(f\,\mathbf 1_E)\circ F \\
        &=(f\circ F)(\mathbf 1_E\circ F) \\
        &=\lambda f\,\mathbf 1_E \\
        &=\lambda g \qquad\omega\text{-a.e.}
\end{aligned}
\]
Thus \(g\) is a nonzero \(L^\infty\)-eigenfunction with eigenvalue \(\lambda\). Hence
\[
        \lambda\in \sigma_{\mathrm p} \left(\K_F:L^\infty \to L^\infty\right).
\]
The proof is complete.
\end{proof}

The previous section ruled out one approximate-spectrum route to analytic hardness.
The next two propositions show that the analogous fixed-eigenvalue point-spectrum route
is also unavailable in the measure-preserving class: for a fixed label \(\lambda\), the
membership problem is Borel. The later calibration theorem therefore uses a sequence of
labels and finite Borel quantifier complexity, not one analytic fixed-label predicate.

\begin{proposition}[Borelness of fixed point-eigenvalue membership]\label{prop:point-borel-fixed-mem}
Let \(\X\) be a compact metric space, let \(\omega\) be a finite Borel measure on \(\X\),
and equip \(C(\X,\X)\) with the uniform metric. Fix $\lambda\in\mathbb C$. Then the set
\[
\mathcal E_\lambda := \left\{F\in C(\X,\X): F_\#\omega=\omega \text{ and }\lambda\in \sigma_{\mathrm p} \left(\K_F : L^\infty(\X,\omega) \to L^\infty(\X,\omega)\right)\right\}
\]
is Borel in \(C(\X,\X)\).

Consequently, if \(\Y\) is a standard Borel space, \(A\subseteq \Y\) is non-Borel, and
\[
        \Phi: \Y \to C(\X,\X)
\]
is Borel, then one cannot have
\[
        y\in A \quad\Longleftrightarrow\quad \Phi(y)\in\mathcal E_\lambda
\]
for $y\in \Y$. In particular, no Borel coding of a non-Borel tree predicate, such as the Silver-tree
predicate, into the measure-preserving continuous class can reduce that predicate to one
fixed \(L^\infty\) point-eigenvalue membership question.
\end{proposition}

\begin{proof}
If \(\omega(\X)=0\), then all \(L^p(\X,\omega)\)-spaces are the zero space and the point
spectrum is empty. Hence
\[
        \mathcal E_\lambda=\varnothing,
\]
which is Borel. We therefore assume
\[
        0<\omega(\X)<\infty .
\]
We first note that the measure-preserving condition is Borel. Since \(\X\) is compact
metric, \(C(\X)\) is separable in the uniform norm. Fix a countable uniformly dense set
\[
        (\varphi_\ell)_{\ell\ge1}\subset C(\X).
\]
For each \(\ell\), the map
\[
        C(\X,\X)\to\mathbb R,\quad
        F\mapsto \int_\X \varphi_\ell(F(x))\,d\omega(x)
\]
is continuous with respect to the uniform metric. Indeed, if \(F_n\to F\) uniformly,
then by uniform continuity of \(\varphi_\ell\),
\[
        \varphi_\ell\circ F_n\to \varphi_\ell\circ F
\]
uniformly, hence also in \(L^1(\omega)\). Therefore
\[
        \int_\X \varphi_\ell(F_n(x))\,d\omega(x) \to
        \int_\X \varphi_\ell(F(x))\,d\omega(x).
\]
Moreover recall,
\[
        F_\#\omega=\omega
\]
if and only if
\[
        \int_\X \varphi_\ell(F(x))\,d\omega(x) =
        \int_\X \varphi_\ell(x)\,d\omega(x)
\]
for $\ell \in \mathbb{N}$. Thus
\[
        \mathrm{MP}:=\{F\in C(\X,\X) : F_\#\omega=\omega\}
\]
is a countable intersection of closed sets, hence Borel.

If \(|\lambda|\ne1\), then
\[
        \mathcal E_\lambda=\varnothing.
\]
Indeed, for \(F\in\mathrm{MP}\), the Koopman operator is an isometry on
\(L^\infty(\X,\omega)\). If
\[
        K_F g=\lambda g
\]
for some nonzero \(g\in L^\infty(\X,\omega)\), then
\[
        \|g\|_\infty = \|\K_Fg\|_\infty = |\lambda|\,\|g\|_\infty,
\]
and therefore \(|\lambda|=1\). Hence the claim is trivial for \(|\lambda|\ne1\).

Assume now that $|\lambda|=1$. By \cref{prop:Linf-L2-point-spec}, for every \(F\in\mathrm{MP}\),
\[
\lambda\in \sigma_{\mathrm p} \left(\K_F : L^\infty \to L^\infty\right) \quad\Longleftrightarrow\quad \lambda\in \sigma_{\mathrm p} \left(\K_F : L^2 \to L^2\right).
\]
It is therefore enough to express the \(L^2\)-eigenvalue condition in a Borel way.

Choose a sequence
\[
        (h_j)_{j\in \mathbb{N}} \subset C(\X,\mathbb C)
\]
which is dense in the \(L^2(\omega)\)-unit sphere
\[
        S_{L^2}:=\{h\in L^2(\X,\omega) : \|h\|_2=1\}.
\]
Such a sequence exists because \(C(\X,\mathbb C)\) is dense in \(L^2(\X,\omega)\) and
\(L^2(\X,\omega)\) is separable.

For \(F\in C(\X,\X)\), \(N\in\mathbb N\), and \(h\in C(\X,\mathbb C)\), define
\[
        A_N^\lambda(F)h := \frac1N\sum_{r=0}^{N-1} \overline{\lambda}^{\,r}\,h\circ F^r,
\]
where \(F^0:=\mathrm{id}_\X\) and \(F^{r+1}:=F\circ F^r\). For fixed \(N\) and \(h\), the function
\[
        A_N^\lambda(F)h
\]
is continuous on \(\X\), hence belongs to \(L^2(\X,\omega)\).

For fixed \(N\) and \(j\), the map
\[
        F\mapsto \|A_N^\lambda(F)h_j\|_{L^2(\omega)}
\]
is continuous on \(C(\X,\X)\). To see this, suppose \(F_n\to F\) uniformly. For each
fixed \(r\), one has
\[
        F_n^r\to F^r
\]
uniformly, by induction on \(r\) and uniform continuity of \(F\) on the compact space
\(\X\). Since \(h_j\) is uniformly continuous,
\[
        h_j\circ F_n^r\to h_j\circ F^r
\]
uniformly. Hence
\[
        A_N^\lambda(F_n)h_j \to A_N^\lambda(F)h_j
\]
uniformly, and therefore in \(L^2(\omega)\).

Now fix \(F\in\mathrm{MP}\). Then
\[
        \K_F : L^2(\X,\omega) \to L^2(\X,\omega)
\]
is an isometry. Put
\[
        U_{\lambda,F}:=\overline{\lambda}\,\K_F .
\]
This is again an isometry on \(L^2(\X,\omega)\). The von Neumann mean ergodic theorem, see, for example \cite[Thm.~8.6]{eisner2015operator}, 
applied to \(U_{\lambda,F}\) gives strong convergence
\[
        \frac1N\sum_{r=0}^{N-1}U_{\lambda,F}^r \longrightarrow P_{\lambda,F},
\]
where \(P_{\lambda,F}\) is the mean-ergodic projection onto
\[
        \operatorname{Fix}(U_{\lambda,F}) = \{h\in L^2 : U_{\lambda,F}h=h\} = \ker(\K_F-\lambda I).
\]
Equivalently,
\[
        A_N^\lambda(F)h \longrightarrow P_{\lambda,F}h \quad\text{in }L^2
\]
for every \(h\in L^2(\X,\omega)\).

We claim that, for \(F\in\mathrm{MP}\),
\[
\lambda\in \sigma_{\mathrm p} \left(\K_F : L^2 \to L^2\right)
\]
if and only if
\[
\exists j\in\mathbb N\ \exists q\in\mathbb N\ \forall N_0\in\mathbb N\ \exists N\ge N_0: \|A_N^\lambda(F)h_j\|_{L^2}>q^{-1}. \tag{\(\ast\)}
\label{eq:point-borel-star}
\]
First assume that \(\lambda\) is an \(L^2\)-eigenvalue. Then
\[
        \ker(\K_F-\lambda I)\ne\{0\},
\]
so
\[
        P_{\lambda,F}\ne 0.
\]
Choose \(h\in S_{L^2}\) with
\[
        \|P_{\lambda,F}h\|_2>0.
\]
Since \(P_{\lambda,F}\) is bounded and \((h_j)\) is dense in the unit sphere, there
exists a \(j\) such that
\[
        \|P_{\lambda,F}h_j\|_2>0.
\]
Choose a \(q\in\mathbb N\) with
\[
        q^{-1}<\frac12\|P_{\lambda,F}h_j\|_2.
\]
Because
\[
        A_N^\lambda(F)h_j\to P_{\lambda,F}h_j \qquad\text{in }L^2,
\]
there exists \(N_1\) such that
\[
        \|A_N^\lambda(F)h_j\|_2>q^{-1}
\]
for all $N\ge N_1$. Hence \eqref{eq:point-borel-star} holds.

Conversely, assume \eqref{eq:point-borel-star}. For the corresponding \(j,q\), the
sequence
\[
        \|A_N^\lambda(F)h_j\|_2
\]
converges to
\[
        \|P_{\lambda,F}h_j\|_2.
\]
Since every tail contains an \(N\) for which the norm is \(>q^{-1}\), the limit must
satisfy
\[
        \|P_{\lambda,F}h_j\|_2\ge q^{-1}>0.
\]
Thus \(P_{\lambda,F}\ne0\), and therefore
\[
        \ker(\K_F-\lambda I)\ne\{0\}.
\]
Hence \(\lambda\) is an \(L^2\)-eigenvalue.

Combining the preceding equivalence with \cref{prop:Linf-L2-point-spec}, we obtain
\[
\mathcal E_\lambda = \mathrm{MP} \cap \bigcup_{j=1}^{\infty} \bigcup_{q=1}^{\infty} \bigcap_{N_0=1}^{\infty} \bigcup_{N\ge N_0} \left\{F\in C(\X,\X): \|A_N^\lambda(F)h_j\|_{L^2}>q^{-1} \right\}.
\]
The set \(\mathrm{MP}\) is Borel, and each set
\[
\left\{F : \|A_N^\lambda(F)h_j\|_{L^2}>q^{-1} \right\}
\]
is open by the continuity proved above. Therefore \(\mathcal E_\lambda\) is Borel.

Finally, let \(\Y\) be a standard Borel space, let \(A\subseteq \Y\) be non-Borel, and let
\[
        \Phi: \Y \to C(\X,\X)
\]
be Borel. Since \(\mathcal E_\lambda\) is Borel, the preimage $\Phi^{-1}(\mathcal E_\lambda)$ is Borel in \(\Y\). Therefore it cannot equal the non-Borel set \(A\). This proves the stated no-reduction consequence.
\end{proof}

\begin{remark}[Why this does not contradict the point-spectral calibration theorem]\label{rem:point-borel-no-contradiction}
\Cref{prop:point-borel-fixed-mem} rules out only one specific route: a Borel coding of a non-Borel predicate into membership of one fixed eigenvalue
\[
        \lambda\in\sigma_{\mathrm p}(\K_F : L^\infty\to L^\infty).
\]
The finite-height calibration below is different. It uses a sequence of labelled eigenvalues $(\lambda_j)_{j\in \mathbb{N}}$ and encodes finite Borel complexity by applying alternating quantifiers over the labels. Thus the complexity produced by the calibration family is finite Borel/tower-height complexity, not analytic non-Borel complexity for a single fixed eigenvalue.
\end{remark}

\section{Point-Spectral Koopman Calibration Families}\label{sec:point-spectrum-calibration}
The previous section shows that one fixed eigenvalue cannot encode analytic non-Borel hardness in the measure-preserving continuous class. We now use a different
mechanism. Instead of one fixed eigenvalue, we use a sequence of labelled roots of unity. Each label records one bit of a Cantor-matrix input, and alternating quantifiers
over the labels recover the finite Borel-height source problems from \cite[Sec.~9]{sorg2026foundational}. We recall for that first the Cantor-setting we worked in several times in this work before.

\begin{definition}[Cantor space, blocks, and point-evaluation oracle]
Let
\[
\X:=2^\N, \qquad d(x,y):=\begin{cases}
0,&x=y,\\
2^{-\min\{r\ge1:x_r\ne y_r\}},&x\ne y,
\end{cases}
\]
and let $\mu=(\tfrac12\delta_0+\tfrac12\delta_1)^{\otimes\N}$ be the Bernoulli product probability measure. Let
\[
x_\infty:=(1,1,1,\ldots),\quad
\Y_j:=\{x\in \X : x_1=\cdots=x_{j-1}=1,\\ x_j=0\},
\]
where $j\in \mathbb{N}$. Then $\X=\{x_\infty\}\sqcup\bigsqcup_{j\in \mathbb{N}} \Y_j$ and $\diam(\Y_j)\to 0$. Fix once and for all the injective coding
\[
        \iota_\X : \X\to\mathbb C,\quad
        \iota_\X(x):=\sum_{r=1}^{\infty}2x_r3^{-r}.
\]
The point-evaluation oracle is
\[
        \Lambda_{\mathrm{pt}} := \{\operatorname{ev}_x : x\in \X\},\quad
        \operatorname{ev}_x(F):=\iota_\X(F(x)).
\]
\end{definition}

\begin{definition}[Prime labels]
Fix a sequence $(q_j)_{j\in \mathbb{N}}$ of pairwise distinct odd primes and set
\[
\lambda_j:=\exp(2\pi i/q_j),\quad
U_{q_j}:=\{\lambda\in\T : \lambda^{q_j}=1\}.
\]
\end{definition}

\begin{definition}[Equal-measure prime-cycle cylinders]
For each $j\in \mathbb{N}$, choose an integer $N_j>j$ such that
\[
2^{N_j-j}\ge q_j.
\]
Choose pairwise distinct words
\[
\alpha_{j,0},\alpha_{j,1},\ldots,\alpha_{j,q_j-1}\in 2^{N_j}
\]
all extending the word $1^{j-1}0$. Let
\[
C_{j,r}:=[\alpha_{j,r}]:=\{x\in \X : x|_{\{1,\ldots,N_j\}}=\alpha_{j,r}\}, \quad 0\le r<q_j.
\]
Then $C_{j,r}\subset \Y_j$ are pairwise disjoint clopen cylinders and $\mu(C_{j,r})=2^{-N_j}$ for every $r$.
For two words $\alpha,\beta\in 2^N$ of the same length, write
\[
H_{\alpha,\beta}: [\alpha]\to[\beta], \quad
H_{\alpha,\beta}(\alpha\concat t):=\beta\concat t.
\]
This is a measure-preserving homeomorphism between the two cylinders.
Finally set the marker point
\[
\xi_j:=\alpha_{j,0}\concat(0,0,0,\ldots)\in C_{j,0}.
\]
\end{definition}

\begin{definition}[Local prime-cycle homeomorphisms]
For $b\in\{0,1\}$ define $\Phi_{j,b}:\Y_j\to \Y_j$ by
\[
\Phi_{j,0}:=\id_{\Y_j},
\]
and
\[
\Phi_{j,1}(x):=
\begin{cases}
H_{\alpha_{j,r},\alpha_{j,r+1\, (\mathrm{mod}\ q_j)}}(x),&x\in C_{j,r},\quad 0\le r<q_j,\\
x,&x\in \Y_j\setminus\displaystyle\bigcup_{r=0}^{q_j-1}C_{j,r}.
\end{cases}
\]
Thus $\Phi_{j,1}$ consists of one $q_j$-cycle of equal-measure clopen cylinders and the identity on the remaining clopen part of $\Y_j$.
\end{definition}

\begin{definition}[Matrix-to-Koopman coding]\label{def:matrix-to-koopman-coding}
Let
\[
\Omega_1:=\{0,1\}^{\N\times\N}
\]
with its product topology and coordinate oracle
$\Lambda_{\mathrm{mat}}:=\{\ev_{a,b}: A\mapsto A(a,b):(a,b)\in\N^2\}$.
Fix a bijection
\[
\eta:\N\to\N^2,\quad \ell:=\eta^{-1}:\N^2\to\N.
\]
For $A\in\Omega_1$ define the bit
\[
b_j(A):=A(\eta(j))
\]
for $j \in \mathbb{N}$. Define then $F_A: \X \to \X$ by
\[
F_A(x_\infty):=x_\infty,\quad F_A|_{\Y_j}:=\Phi_{j,b_j(A)}
\]
for $j\in \mathbb{N}$. Set
\[
\K_{\mathrm{pt}}:=\{F_A : A\in\Omega_1\}\subset C(\X,\X).
\]
\end{definition}

\begin{proposition}[Regularity of the coding family]\label{prop:coding-regularity-family}
For every $A\in\Omega_1$, the map $F_A$ is a measure-preserving homeomorphism of
$(\X,\mu)$. Moreover, the coding map
\[
\mathcal C:\Omega_1\to C(\X,\X),\qquad \mathcal C(A):=F_A,
\]
is continuous.
\end{proposition}

\begin{proof}
For fixed \(j\), the map \(\Phi_{j,0}\) is the identity on \(\Y_j\), and
\(\Phi_{j,1}\) is obtained by cyclically permuting finitely many equal-measure clopen cylinders
\[
        C_{j,0},\ldots,C_{j,q_j-1}
\]
and acting as the identity on the clopen complement
\[
        \Y_j\setminus\bigcup_{r=0}^{q_j-1}C_{j,r}.
\]
Thus each \(\Phi_{j,b}\) is a measure-preserving homeomorphism of \(\Y_j\). The sets \(\Y_j\) are pairwise disjoint clopen subsets of \(\X\), and
\[
        \X=\{x_\infty\}\sqcup\bigsqcup_{j\in \mathbb{N}} \Y_j.
\]
On each \(\Y_j\), the map \(F_A\) is the homeomorphism \(\Phi_{j,b_j(A)}\), and
\(F_A(x_\infty)=x_\infty\). Since
\[
        \operatorname{diam}(\Y_j)\to 0,
\]
the piecewise definition is continuous at the only accumulation point \(x_\infty\). Hence
\(F_A\) is a homeomorphism of \(\X\). It preserves \(\mu\), because each piece map
\(\Phi_{j,b_j(A)}\) preserves the restriction of \(\mu\) to \(\Y_j\), and
\(\mu(\{x_\infty\})=0\).

It remains to prove continuity of $\mathcal C$. Let \(\eta>0\). Choose \(J\) such that
\[
        \sup_{j>J}\operatorname{diam}(\Y_j)<\eta.
\]
If \(A,A'\in\Omega_1\) satisfy
\[
        b_j(A)=b_j(A')
\]
for $1\le j\le J$, then
\[
        F_A=F_{A'} \quad\text{on}\quad \{x_\infty\}\cup\bigcup_{j=1}^{J} \Y_j.
\]
On the remaining blocks \(\Y_j\), \(j>J\), both \(F_A(x)\) and \(F_{A'}(x)\) belong to
the same block \(\Y_j\). Hence
\[
        d(F_A(x),F_{A'}(x)) \le \operatorname{diam}(\Y_j) <\eta.
\]
Therefore
\[
        \sup_{x\in \X} d(F_A(x),F_{A'}(x))<\eta.
\]
This is precisely continuity of \(A\mapsto F_A\) from the product topology on
\(\Omega_1\) to the uniform topology on \(C(\X,\X)\).
\end{proof}

\begin{lemma}[Exact point spectrum of a finite clopen cycle]\label{lem:finite-clopen-cycle-point-spectrum}
Let \((Z,\nu)\) be a finite measure space, and suppose
\[
Z=Z_\ast\sqcup Z_{\mathrm{id}}
\]
modulo null sets, where
\[
Z_\ast=Z_0\sqcup\cdots\sqcup Z_{L-1}
\]
with \(0<\nu(Z_r)<\infty\). Suppose \(F(Z_r)=Z_{r+1\!\!\pmod L}\) modulo null sets, \(F^L=\mathrm{id}\) modulo null sets on \(Z_\ast\), and
\(F=\mathrm{id}\) modulo null sets on \(Z_{\mathrm{id}}\). Then
\[
\sigma_p(\K_F : L^\infty(Z)\to L^\infty(Z))=U_L, \quad U_L:=\{\lambda\in\mathbb T:\lambda^L=1\}.
\]
\end{lemma}

\begin{proof}
First consider the pure \(L\)-cycle part \(Z_\ast\). On \(L^\infty(Z_\ast)\) we have $\K_F^L=I$. If \(\lambda^L\ne1\), then
\[
        (\K_F-\lambda I)\left(\K_F^{L-1}+\lambda \K_F^{L-2}+\cdots+\lambda^{L-1}I\right)=(1-\lambda^L)I,
\]
so \(\K_F-\lambda I\) is invertible on the pure cycle part. Hence no such \(\lambda\) is a point eigenvalue on \(Z_\ast\).

If \(\lambda^L=1\), define
\[
        f|_{Z_r}:=\lambda^{-r}
\]
for $r=0,\ldots,L-1$. Then \(f\in L^\infty(Z_\ast)\), \(f\ne0\), and
\[
        \K_F f=\lambda^{-1}f.
\]
Since \(U_L\) is closed under inversion, every element of \(U_L\) occurs as an eigenvalue.

On the identity part \(Z_{\mathrm{id}}\), the point spectrum is contained in \(\{1\}\).
Since \(1\in U_L\), adjoining the identity part does not add anything outside \(U_L\).
Thus
\[
        \sigma_{\mathrm p}(\K_F:L^\infty(Z) \to L^\infty(Z))=U_L.
\]
\end{proof}

\begin{lemma}[Exact point spectrum of an \(\ell^\infty\) block product]\label{lem:Linf-point-block-product}
Let
\[
        \X=\bigsqcup_{j\in J} \X_j
\]
be a countable measurable partition as in \cref{lem:block-lower-norm-exact}, and
suppose that \(F\) leaves each \(\X_j\) invariant modulo null sets. Then
\[
\sigma_{\mathrm p}(\K_F:L^\infty(\X) \to L^\infty(\X))=
\bigcup_{j\in J} \sigma_{\mathrm p}(\K_{F|\X_j} : L^\infty(\X_j) \to L^\infty(\X_j)).
\]
\end{lemma}

\begin{proof}
Under the restriction isomorphism
\[
        L^\infty(\X,\omega)\cong \prod_{j\in J}^{\ell^\infty} L^\infty(\X_j,\omega|_{\X_j}),
\]
the operator \(\K_F\) is the \(\ell^\infty\)-product of the block operators \(\K_{F|\X_j}\).

If \(\lambda\) is a point eigenvalue on one block, extend a nonzero block eigenfunction
by zero to all other blocks. This gives a nonzero \(L^\infty(\X)\)-eigenfunction for \(\K_F\).

Conversely, suppose
\[
        \K_F f=\lambda f
\]
for some nonzero
\[
        f=(f_j)_{j\in J} \in \prod_{j\in J}^{\ell^\infty} L^\infty(\X_j).
\]
Since \(f\ne 0\), there exists \(j_0\in J\) with \(f_{j_0}\ne 0\). The block equation gives
\[
        \K_{F|\X_{j_0}}f_{j_0}=\lambda f_{j_0}.
\]
Hence \(\lambda\) is a point eigenvalue of the \(j_0\)-th block. This proves the equality.
\end{proof}

\begin{theorem}[Spectral decoding of matrix bits]\label{thm:spectral-decoding-matrix-bits}
For every $A\in\Omega_1$ and every $j\in \mathbb{N}$,
\[
b_j(A)=1 \quad\Longleftrightarrow\quad \lambda_j\in\sigp(\K_{F_A} : L^\infty(\X,\mu)\to L^\infty(\X,\mu)).
\]
More precisely,
\[
\sigp(\K_{F_A} : L^\infty\to L^\infty) = \{1\}\cup\bigcup_{\{j : b_j(A)=1\}} U_{q_j}.
\]
\end{theorem}

\begin{proof}
Use the decomposition
\[
        \X=\{x_\infty\}\sqcup\bigsqcup_{j\in \mathbb{N}} \Y_j.
\]
The singleton \(\{x_\infty\}\) has \(\mu\)-measure zero and does not affect \(L^\infty(\X,\mu)\).

Fix \(j\in \mathbb{N}\). If \(b_j(A)=0\), then
\[
        F_A|_{\Y_j}=\mathrm{id}_{\Y_j},
\]
so the \(j\)-th block contributes only the eigenvalue \(1\).

If \(b_j(A)=1\), then \(F_A|_{\Y_j}\) consists of one \(q_j\)-cycle on the clopen
cylinders
\[
        C_{j,0},\ldots,C_{j,q_j-1}
\]
and is the identity on the remaining clopen part of \(\Y_j\). By \cref{lem:finite-clopen-cycle-point-spectrum}, the point spectrum on this block is $U_{q_j}$.
Applying \cref{lem:Linf-point-block-product} to the countable block product gives
\[
        \sigma_{\mathrm p}(\K_{F_A}: L^\infty \to L^\infty) = \{1\}\cup \bigcup_{\{j : b_j(A)=1\}} U_{q_j}.
\]
Now \(q_j\) are pairwise distinct odd primes, and
\[
        \lambda_j=\exp(2\pi i/q_j)
\]
is a primitive \(q_j\)-th root of unity. Hence
\[
        \lambda_j\notin U_{q_k}
\]
for $k\ne j$, and \(\lambda_j\ne1\). Therefore
\[
        \lambda_j\in\sigma_{\mathrm p}(\K_{F_A}) \quad\Longleftrightarrow\quad b_j(A)=1.
\]
This proves both assertions.
\end{proof}

\begin{definition}[Alternating point-spectral Koopman decision problems]\label{def:Theta-m-alternate}
For each $m\in \mathbb{N}$, fix a bijection
\[
\beta_m:\N^m\to\N^2.
\]
Let $Q_1=\exists$, $Q_2=\forall$, $Q_3=\exists, \dots$ . For $F_A\in\K_{\mathrm{pt}}$, define $\Theta_m(F_A)=1$ if and only if
\[
(Q_1 n_1)(Q_2 n_2)\cdots(Q_m n_m) \left[\lambda_{\ell(\beta_m(n_1,\ldots,n_m))}\in\sigp(\K_{F_A} : L^\infty\to L^\infty)\right].
\]
Equivalently, by \cref{thm:spectral-decoding-matrix-bits},
\[
\Theta_m(F_A)=1 \quad\Longleftrightarrow\quad
(Q_1 n_1)\cdots(Q_m n_m) \left[A(\beta_m(n_1,\ldots,n_m))=1\right].
\]
Let \(d_{\mathrm{disc}}\) be the discrete metric on \(\{0,1\}\). The corresponding
Koopman point-spectral decision problem is
\[
        \mathcal P_m^{Kpt} := \left(\Theta_m, \, \mathcal K_{\mathrm{pt}},\,(\{0,1\},d_{\mathrm{disc}}),\,\Lambda_{\mathrm{pt}}\right).
\]
\end{definition}

Let \(\mathcal P_m^{\mathrm{mat}}\) denote the Cantor-matrix source problem from \cite[Def.~9.1-9.2]{sorg2026foundational}. We will prove below that \(\mathcal P_m^{Kpt}\) is finite-query equivalent to \(\mathcal P_m^{\mathrm{mat}}\).

\begin{definition}[Finite-query equivalence in this section]\label{def:local-fq-equivalence}
For the two computational problems \(\mathcal P_m^{\mathrm{mat}}\) and \(\mathcal P_m^{Kpt}\), we write
\[
        \mathcal P_m^{\mathrm{mat}}\equiv_{\mathrm{fq}} \mathcal P_m^{Kpt}
\]
if every matrix-coordinate query can be simulated by finitely many point-evaluation queries on \(\mathcal K_{\mathrm{pt}}\), and every point-evaluation query on
\(\mathcal K_{\mathrm{pt}}\) can be simulated by finitely many matrix-coordinate queries, with the target predicates identified under the coding \(A\mapsto F_A\).
\end{definition}

\begin{proposition}[Finite-query transport between the matrix source and the Koopman family]\label{prop:finite-query-transport-matrix-koopman}
The coding $A\mapsto F_A$ has the following two finite-query properties.
\begin{enumerate}[label=(\roman*),leftmargin=*]
\item \textbf{Coordinate-to-point simulation:} For every matrix coordinate $(a,b)\in\N^2$, let $j=\ell(a,b)$. Then the bit $A(a,b)$ is recovered
from one point query to $F_A$ at the fixed marker $\xi_j$:
\[
A(a,b)=1 \quad\Longleftrightarrow\quad F_A(\xi_j)\ne\xi_j.
\]
\item \textbf{Point-to-coordinate locality:}
For every $x\in \X$ there exists a set $J(x)\subseteq\N$ with $|J(x)|\le1$ such that the value
$F_A(x)$ is determined by the bits $\{b_j(A) : j\in J(x)\}$. Explicitly, $J(x)=\varnothing$ if
$x=x_\infty$ or if $x\in \Y_j\setminus\bigcup_rC_{j,r}$, and $J(x)=\{j\}$ if $x\in\bigcup_r C_{j,r}\subset \Y_j$ for the unique $j$ with $x \in \Y_j$.
\end{enumerate}
Consequently,
\[
        \mathcal P_m^{\mathrm{mat}}\equiv_{\mathrm{fq}} \mathcal P_m^{Kpt}
\]
for every \(m\in \mathbb{N}\), in the raw finite-query sense.
\end{proposition}

\begin{proof}
For (i), let \((a,b)\in\mathbb N^2\) and set
\[
        j:=\ell(a,b).
\]
Then
\[
        b_j(A)=A(\eta(j))=A(a,b).
\]
By construction,
\[
        \Phi_{j,0}(\xi_j)=\xi_j,
\]
whereas
\[
        \Phi_{j,1}(\xi_j)= H_{\alpha_{j,0},\alpha_{j,1}}(\xi_j) \ne \xi_j.
\]
Thus
\[
        A(a,b)=1 \quad\Longleftrightarrow\quad F_A(\xi_j)\ne\xi_j.
\]
Since the point-evaluation oracle returns
\[
        \operatorname{ev}_{\xi_j}(F_A)=\iota_X(F_A(\xi_j)),
\]
and \(\iota_\X\) is injective, comparison with the known value \(\iota_\X(\xi_j)\) recovers
the matrix bit from one point query.

For (ii), fix \(x\in \X\). If \(x=x_\infty\), then
\[
        F_A(x)=x_\infty
\]
for all \(A\), so no matrix bit is needed. If \(x\in \Y_j\setminus\bigcup_r C_{j,r}\),
then \(F_A(x)=x\) for both values of \(b_j(A)\), so again no bit is needed. If
\[
        x\in\bigcup_{r=0}^{q_j-1} C_{j,r}\subseteq \Y_j,
\]
then \(F_A(x)\) is determined completely by the single bit \(b_j(A)\). Hence
\[
        J(x)=
        \begin{cases}
        \varnothing,&x=x_\infty\text{ or }x\in \Y_j\setminus\bigcup_r C_{j,r},\\
        \{j\},&x\in\bigcup_r C_{j,r}\subseteq \Y_j
        \end{cases}
\]
has the required property.

Now consider a finite point-evaluation transcript
\[
        F_A(x_1),\ldots,F_A(x_N).
\]
By the preceding paragraph, each value \(F_A(x_i)\) is determined by at most one bit
\(b_j(A)\), hence by at most one matrix-coordinate query. Therefore the whole point
transcript is simulated by finitely many matrix-coordinate queries.

Conversely, by (i), every matrix-coordinate query is simulated by one fixed marker-point
query. These two simulations convert deepest-level general algorithms in one problem
into deepest-level general algorithms in the other problem, preserving tower height.
Thus the two problems are finite-query equivalent.
\end{proof}

\begin{theorem}[Exact finite type-\(G\) height of the Koopman point-spectrum calibration family]\label{thm:Kpt-exact-height}
For every \(m\in \mathbb{N}_0\),
\[
        \mathrm{SCI}_G(\mathcal P_m^{Kpt})=m.
\]
\end{theorem}

\begin{proof}
By \cite[Thm.~9.7]{sorg2026foundational},
\[
        \mathrm{SCI}_G(\mathcal P_m^{\mathrm{mat}})=m.
\]
By \cref{prop:finite-query-transport-matrix-koopman},
\[
        \mathcal P_m^{\mathrm{mat}} \equiv_{\mathrm{fq}} \mathcal P_m^{Kpt}.
\]
Finite-query equivalence preserves raw type-\(G\) tower height: each deepest-level
general algorithm for one problem is converted into a deepest-level general algorithm for
the other, and the same transformation is applied at every multi-index of the tower.
Therefore
\[
        \mathrm{SCI}_G(\mathcal P_m^{Kpt}) =
        \mathrm{SCI}_G(\mathcal P_m^{\mathrm{mat}}) = m.
\]
\end{proof}

\begin{definition}[Tagged disjoint union of the calibration problems]\label{def:tagged-Kpt-union}
Let
\[
        \Omega_{\mathrm{tag}} := \{(m,F_A) : m\in\mathbb N,\ A\in\Omega_1\}.
\]
Equip this input class with the oracle
\[
        \Lambda_{\mathrm{tag}}:= \{\tau\}\cup\{\operatorname{ev}_x : x\in \X\},
\]
where
\[
        \tau(m,F_A):=m \in \mathbb{C}
\]
and
\[
        \operatorname{ev}_x(m,F_A):=\iota_\X(F_A(x)).
\]
Define
\[
        \Theta_\infty(m,F_A):=\Theta_m(F_A).
\]
The tagged Koopman calibration problem is
\[
        \mathcal P_\infty^{Kpt} :=\left(\Theta_\infty,\,\Omega_{\mathrm{tag}},\,(\{0,1\},d_{\mathrm{disc}}),\,\Lambda_{\mathrm{tag}}\right).
\]
\end{definition}

\begin{theorem}[Infinite type-G height by unbounded finite heights]\label{thm:tagged-infinity-calibration}
The tagged Koopman point-spectrum calibration problem satisfies
\[
\SCIG(\mathcal P_\infty^{\mathrm{Kpt}})=\infty.
\]
\end{theorem}

\begin{proof}
Suppose, toward a contradiction, that \(\mathcal P_\infty^{Kpt}\) is computed by a type-\(G\) tower of finite height \(k\). Restrict the tower to the slice
\[
        \Omega_{k+1}:= \{(k+1,F_A) : A\in\Omega_1\}.
\]
On this slice, the tag query \(\tau\) is constant with value \(k+1\). The remaining
queries are precisely point-evaluation queries of \(F_A\). Hence the restricted tower is
a height-\(k\) tower computing \(\Theta_{k+1}\) on \(\mathcal K_{\mathrm{pt}}\), i.e. it computes
\(\mathcal P_{k+1}^{Kpt}\) at height \(k\). This contradicts \cref{thm:Kpt-exact-height}, which gives
\[
        \SCIG(\mathcal P_{k+1}^{Kpt})=k+1.
\]
Therefore no finite-height tower computes \(\mathcal P_\infty^{Kpt}\), and so
\[
        \SCIG(\mathcal P_\infty^{Kpt})=\infty.
\]
\end{proof}

\begin{remark}[How to avoid an explicit external tag]
If a purely dynamical input class is preferred, the tag $m$ can be encoded into an additional clopen
marker region of the Cantor map. This is only a notational complication: reserve a disjoint sequence
of clopen marker blocks, make the map display a finite cycle of length depending on $m$ on exactly one
marker block, and leave the point-spectrum coding blocks above unchanged. The finite-query proof is
then the same, with one preliminary marker query decoding the tag.
\end{remark}

\section{Outlook And Further Directions}\label{sec:outlook}

The picture separates three endpoint phenomena.

First, the \(L^1\) endpoint fits the residual finite-dimensional paradigm after the target split
\[
        R_{\mathrm{ap},\varepsilon},\,
        C_{\mathrm{ap},\varepsilon},\,
        \sigma_{\mathrm{ap}}.
\]
The proof is an endpoint adaptation of the continuous-dictionary quadrature-residual method used in \cite{sorg2025solvability}.

Second, the \(L^\infty\) approximate point spectrum is unstable in a much stronger
sense. Even arbitrarily small uniform perturbations of a measure-preserving Cantor homeomorphism can change
\[
        \sigma_{\mathrm{ap}}(\K_F : L^\infty\to L^\infty)
\]
from \(\{1\}\) to the whole unit circle. The finite-period Silver-tree route also collapses
to the Borel condition that the selected periods are unbounded, so it cannot produce
analytic hardness for the \(L^\infty\) approximate point spectrum.

Third, the exact \(L^\infty\) point spectrum gives a clean calibration family for raw SCI
height.  The labelled eigenvalue decisions constructed above are finite-query equivalent
to the canonical Cantor-matrix source predicates.  Hence Koopman point-spectrum
membership can realize every finite raw type-\(G\) height, and the tagged union realizes
infinite raw type-\(G\) height.

Several questions remain open. The present work does not give a complete classification of the \(L^\infty\) approximate Koopman point spectrum. It gives no-go diagnostics
for a finite-period Silver construction and a positive calibration theorem for the exact point spectrum. A natural next step seems to seek non-periodic or non-Borel
\(L^\infty\) approximate-spectrum constructions, if such constructions exist, while respecting the distinction between raw SCI, regularity-restricted SCI, and
Weihrauch/Type-2 computability developed in \cite{sorg2026foundational}.

\textbf{Acknowledgments } The author thanks Gustav Conradie for interesting discussions about the work in \cite{sorg2025solvability}, from which the author gained some inspiration for many of the results in this work.

\textbf{Funding } This research did not receive any specific grant from funding agencies in the public, commercial, or not-for-profit sectors.

\textbf{Statement } During the preparation of this work the author used chatGPT4o (through FAU's HAWKI) in order to improve the language in the abstract and introduction. After using this tool, the author reviewed and edited the content as needed and takes full responsibility for the content of the published article.

\textbf{Conflict of interest } The author declares no conflict of interest.

% References
    \bibliographystyle{alpha}  
    \bibliography{bib.bib}

\newcommand{\etalchar}[1]{$^{#1}$}
\begin{thebibliography}{BACH{\etalchar{+}}15}

\bibitem[BACH{\etalchar{+}}15]{ben2015computing}
Jonathan Ben-Artzi, Matthew~J Colbrook, Anders~C Hansen, Olavi Nevanlinna, and
  Markus Seidel.
\newblock Computing spectra--on the solvability complexity index hierarchy and
  towers of algorithms.
\newblock {\em arXiv preprint arXiv:1508.03280}, 2015.

\bibitem[BBKK21]{brunton2021modern}
Steven~L Brunton, Marko Budi{\v{s}}i{\'c}, Eurika Kaiser, and J~Nathan Kutz.
\newblock Modern koopman theory for dynamical systems.
\newblock {\em arXiv preprint arXiv:2102.12086}, 2021.

\bibitem[BMM12]{budivsic2012applied}
Marko Budi{\v{s}}i{\'c}, Ryan Mohr, and Igor Mezi{\'c}.
\newblock Applied koopmanism.
\newblock {\em Chaos: An Interdisciplinary Journal of Nonlinear Science},
  22(4), 2012.

\bibitem[CAS23]{colbrook2023residual}
Matthew~J. Colbrook, Lorna~J. Ayton, and Máté Szőke.
\newblock Residual dynamic mode decomposition: robust and verified koopmanism.
\newblock {\em Journal of Fluid Mechanics}, 955:A21, 2023.

\bibitem[CH22]{colbrook2022foundations}
Matthew~J Colbrook and Anders~C Hansen.
\newblock The foundations of spectral computations via the solvability
  complexity index hierarchy.
\newblock {\em Journal of the European Mathematical Society},
  25(12):4639--4718, 2022.

\bibitem[CMS24]{colbrook2024limits}
Matthew~J Colbrook, Igor Mezi{\'c}, and Alexei Stepanenko.
\newblock Limits and powers of koopman learning.
\newblock {\em arXiv preprint arXiv:2407.06312}, 2024.

\bibitem[CT24]{colbrook2024rigorous}
Matthew~J. Colbrook and Alex Townsend.
\newblock Rigorous data-driven computation of spectral properties of {K}oopman
  operators for dynamical systems.
\newblock {\em Comm. Pure Appl. Math.}, 77(1):221--283, 2024.

\bibitem[EFHN15]{eisner2015operator}
Tanja Eisner, B{\'a}lint Farkas, Markus Haase, and Rainer Nagel.
\newblock {\em Operator theoretic aspects of ergodic theory}, volume 272.
\newblock Springer, 2015.

\bibitem[Han11]{hansen2011solvability}
Anders Hansen.
\newblock On the solvability complexity index, the $\varepsilon$-pseudospectrum
  and approximations of spectra of operators.
\newblock {\em Journal of the American Mathematical Society}, 24(1):81--124,
  2011.

\bibitem[Kec95]{Kec95}
Alexander~S. Kechris.
\newblock {\em Classical descriptive set theory}, volume 156 of {\em Graduate
  Texts in Mathematics}.
\newblock Springer-Verlag, New York, 1995.

\bibitem[KN32]{koopman1932dynamical}
Bernard~O Koopman and J~v Neumann.
\newblock Dynamical systems of continuous spectra.
\newblock {\em Proceedings of the National Academy of Sciences},
  18(3):255--263, 1932.

\bibitem[Koo31]{koopman1931hamiltonian}
Bernard~O Koopman.
\newblock Hamiltonian systems and transformation in hilbert space.
\newblock {\em Proceedings of the National Academy of Sciences},
  17(5):315--318, 1931.

\bibitem[Mez05]{mezic2005spectral}
Igor Mezi{\'c}.
\newblock Spectral properties of dynamical systems, model reduction and
  decompositions.
\newblock {\em Nonlinear Dynamics}, 41(1):309--325, 2005.

\bibitem[MZ24]{mazurkiewicz2024idealanalyticsets}
Lukasz Mazurkiewicz and Szymon Zeberski.
\newblock Ideal analytic sets.
\newblock {\em arXiv preprint arXiv:2310.07693}, 2024.

\bibitem[RMB{\etalchar{+}}09]{rowley2009spectral}
Clarence~W Rowley, Igor Mezi{\'c}, Shervin Bagheri, Philipp Schlatter, and
  Dan~S Henningson.
\newblock Spectral analysis of nonlinear flows.
\newblock {\em Journal of fluid mechanics}, 641:115--127, 2009.

\bibitem[Sch10]{schmid2010dynamic}
Peter~J Schmid.
\newblock Dynamic mode decomposition of numerical and experimental data.
\newblock {\em Journal of fluid mechanics}, 656:5--28, 2010.

\bibitem[Sor25]{sorg2025solvability}
Christopher Sorg.
\newblock Residual sci upper bounds and lower witnesses for koopman approximate
  point spectra in ${L}^p$ for $1<p<\infty$: Extended version.
\newblock {\em arXiv preprint arXiv:2509.16016v3}, 2025.

\bibitem[Sor26]{sorg2026foundational}
Christopher Sorg.
\newblock Foundational analysis of the solvability complexity index: The
  weihrauch-sci intermediate hierarchy.
\newblock {\em arXiv preprint arXiv:2603.18955v2}, 2026.

\bibitem[WKR15]{williams2015edmd}
Matthew~O. Williams, Ioannis~G. Kevrekidis, and Clarence~W. Rowley.
\newblock A data-driven approximation of the {K}oopman operator: extending
  dynamic mode decomposition.
\newblock {\em J. Nonlinear Sci.}, 25(6):1307--1346, 2015.

\end{thebibliography}
% ----------------------------------------

\end{document}